\newtheorem{theorem}{Theorem}[section]
\newtheorem{lemma}[theorem]{Lemma}
\newtheorem{assumption}[theorem]{Assumption}
\newtheorem{remark}[theorem]{Remark}
\newtheorem{definition}[theorem]{Definition}
\newtheorem{main result}[theorem]{Main Result}
\definecolor{pastelred}{rgb}{1.0, 0.41, 0.38}
\definecolor{lightgreen}{rgb}{0.56, 0.93, 0.56}
\definecolor{lightblue}{rgb}{0.53, 0.81, 0.98}
\newtheorem*{rep@theorem}{\rep@title}
\newcommand{\newreptheorem}[2]{%
\newenvironment{rep#1}[1]{%
 \def\rep@title{#2 \ref{##1}}%
 \begin{rep@theorem}}%
 {\end{rep@theorem}}}
\newcommand{\mc}{\mathcal}
\renewcommand*\d{\mathop{}\!\mathrm{d}}
\newcommand{\R}{\mathbb{R}}
\newcommand{\N}{\mathbb{N}}
\newcommand{\T}{\ensuremath{\mathcal{T}}}
\newcommand{\set}[1]{\ensuremath{\left\{#1\right\}}}
\newcommand{\interior}[1]{\ensuremath{\mathring{#1}}}
\DeclareMathOperator*{\diam}{diam}
\newcommandx{\norm}[2][2=]{\ensuremath{\left\| #1 \right\|_{#2}}}
\newcommandx{\snorm}[2][2=]{\ensuremath{\left| #1 \right|_{#2}}}
\renewcommand{\P}{\ensuremath{\mathcal{P}}}
\newcommand{\Q}{\ensuremath{\mathcal{Q}}}
\newcommand{\poly}{\ensuremath{\mathbb{P}}}
\DeclareMathOperator*{\linspan}{span}
\newcommand{\V}{\ensuremath{\mathbb{V}}}
\newcommand{\proj}{\ensuremath{\Pi}}
\newcommand{\indicator}{\ensuremath{\mathbbm{1}}}
\newcommand{\inp}[2]{\mathopen{}\left\langle #1,\, #2\right\rangle\mathclose{}}
\DeclareMathOperator*{\gen}{gen}
\DeclareMathOperator*{\dist}{dist}
\newcommand{\QRG}{\textsf{Q-RG}}
\newcommand{\QR}{\textsf{Q-R}}
\newcommand{\QRB}{\textsf{Q-RB}}
\DeclareMathOperator*{\CE}{PE}
\title{$H^1$-Stability of the $L^2$-Projection onto Finite Element Spaces
\\on
Adaptively Refined Quadrilateral Meshes}
\author{Mazen Ali}
\address[MA]{Centrale Nantes, LMJL UMR CNRS 6629, France}
\email{mazen.ali@ec-nantes.fr}
\author{Stefan A.\ Funken \and Anja Schmidt}
\address[SF,AS]{Ulm University, Institute for Numerical Mathematics, Germany}
\email{\{stefan.funken,anja.schmidt\}@uni-ulm.de}
\date{\today}
\keywords{$H^1$-Stability, $L^2$-Projection,
Finite Elements, Quadrilateral Mesh, Adaptive Refinement.}
\subjclass[2010]{65M50}
\begin{document}

\begin{abstract}
    The $L^2$-orthogonal projection $\proj_h:L^2(\Omega)\rightarrow\V_h$
    onto a finite element (FE) space $\V_h$ is called $H^1$-stable
    iff
    $
        \norm{\nabla\proj_h u}[L^2(\Omega)]\leq C\norm{u}[H^1(\Omega)],
    $
    for any $u\in H^1(\Omega)$ with a positive constant
    $C\neq C(h)$ independent of the mesh size $h>0$.
    In this work, we discuss local criteria for the $H^1$-stability
    of adaptively refined meshes. We show that adaptive refinement
    strategies for quadrilateral meshes in 2D ({\QRG}
    and {\QRB}), introduced originally in
    Bank et al.\ 1982 and Kobbelt 1996, are $H^1$-stable for FE spaces of
    polynomial degree $p=2,\ldots,9$.
\end{abstract}

\maketitle

\section{Introduction}

Let $\Omega\subset\R^d$ be a bounded domain and
$\V_h\subset H^1(\Omega)$ a finite element (FE) space on $\Omega$.
The $H^1$-stability of the $L^2$-orthogonal projection
$\proj_h:L^2(\Omega)\rightarrow\V_h$ plays a key role in the
analysis of FE methods.
To mention a few examples:
in \cite{Tantardini},
the authors show that $H^1$-stability is equivalent to
the inf-sup stability and quasi-optimality of Galerkin methods
for parabolic equations;
$H^1$-stability is used in the analysis of multi-grid methods
(see \cite{yserentant1993}), boundary element methods (see
\cite{McLean1999,Steinbach2000,Steinbach1998}),
and adaptive methods (see \cite{Aurada}).
See also \cite{CC99, Wahlbin1995} for other applications.

For any given finite-dimensional $\V_h$, we trivially have
$\norm{\nabla\proj_h u}[L^2]\leq C\norm{u}[H^1]$ due to the equivalence
of norms on finite-dimensional spaces.
However, in general the constant $C=C(h)$ will depend on the
dimension of $\V_h$ or, equivalently, mesh size $h>0$.
The issue of $H^1$-stability is thus showing the constant
$C\neq C(h)$ does not depend on the dimension of $\V_h$.

It can be shown that for $H^1$-stability it is sufficient
to show stability in a weighted  $L^2$-norm
\begin{align}\label{eq:intro}
    \sum_{T\in\T}h_T^{-2}\norm{\proj_h u}[L^2(T)]^2
    \lesssim\sum_{T\in\T}h_T^{-2}\norm{u}[L^2(T)]^2,
\end{align}
see \Cref{sec:gencrit} for more details.
The above estimate is straight-forward if $\T$ is assumed to be
quasi-uniform, i.e.,
$(\max_Th_T)/(\min_Th_T)\sim 1$.
However, quasi-uniformity does not hold in general
for adaptively refined meshes.

\subsection{Previous Work}\label{sec:prwork}
Before we state the main contribution of this work, we briefly review the
key difficulties of showing $H^1$-stability and previous results
on this subject.

\subsubsection{Global Growth Condition}
In \cite{CT} (CT) the authors suggest criteria based on the localization
properties of the FE space $\V_h$ and the rate at which the element
size $h_T$ may vary. Namely, for any $u\in L^2(\Omega)$,
we can write\footnote{Assuming the meshing of $\Omega$ is exact.}
$u=\sum_{T\in\T}u_T$, with each $u_T$ supported only on $T$.
The localization property of $\proj_h$ from CT can be described by
a function $\gamma(T,\bar T)\geq 0$, decreasing with the distance between
$T$ and $\bar T$, such that
\begin{align*}
    \norm{\proj_hu_T}[L^2(\bar T)]\leq \gamma(T,\bar T)\norm{u_T}[L^2(\Omega)].
\end{align*}
Then, for any $T\in\T$ we can estimate
\begin{align*}
    h_T^{-1}\norm{\proj_h u}[L^2(T)]\leq
    h_T^{-1}\sum_{\bar T\in\T}\gamma(T,\bar T)\norm{u_{\bar T}}[L^2(T)]
    =\sum_{\bar T\in\T}\gamma(T,\bar T)
    \frac{h_{\bar T}}{h_T}h_{\bar T}^{-1}    
    \norm{u_{\bar T}}[L^2(T)].
\end{align*}
Summing over $T\in\T$, we can thus show \cref{eq:intro} if we can bound
\begin{align}\label{eq:growth}
    \sum_{\bar T\in\T}\gamma(T,\bar T)\frac{h_{\bar T}}{h_T}\lesssim 1,
\end{align}
independently of $T\in\T$.

The issue of $H^1$-stability hinges on the interplay between
the localization property of $\proj_h$ and the variation in size $h_T/h_{\bar T}$.
For common refinement strategies, e.g., newest vertex bisection (NVB)
or red green blue (RGB) refinement, the ratio $h_T/h_{\bar T}$ may grow exponentially, in the worst case;
while $\norm{\proj_h u_T}[L^2({\bar T})]$ will decay exponentially.
Whether \cref{eq:growth} is satisfied then depends on the factors
in the exponents of the growth/decay of both quantities.

E.g., \cref{eq:growth} is trivially satisfied for the non-conforming
case, see \Cref{thm:dg}. It is also satisfied for any projection for which
the support of $\proj_h u_T$ is finite, as is the case
for quasi-interpolation (Cl\'{e}ment-type) operators
(see also \Cref{ass:clement}).
Finally, we note that $H^1$-stability of the $L^2$-projection
is closely related to the question of the decay of the entries of
the inverse of
the mass-matrix away from the diagonal, see \cite{Demco}.

\subsubsection{Element-Wise Criteria}

The CT criterion \eqref{eq:growth} illustrates the main issues
of showing $H^1$-stability and all criteria proposed thereafter
are essentially based on the same idea. However,
\cref{eq:growth} is not easy to verify for common
adaptive refinement strategies.

In \cite{BPS} (BPS) the authors propose
a criteria that can be verified locally on an element $T\in\T$.
In \cite{CC} (CC) this was generalized to more flexible criteria
that can also be locally verified, and where
BPS and CT can be seen as specific instances of the CC criteria.

In \cite{BY} (BY) the authors suggest criteria that
can be verified by computing (small)
eigenvalue problems.
Though BY uses a proof technique different from CC,
it can be in fact seen as a particular instance of CC,
with the difference being that BY is easier to verify,
see also \Cref{rem:corr} for more details.

In \cite{Siebert} all of the above criteria are summarized into a single
framework that is both most flexible and
easiest to verify. Finally, in \cite{CCRGB,DirkNVB,GaspozNVB} the above criteria
were applied to show $H^1$-stability for adaptively
refined triangular meshes in 2D.

\subsection{This Work}

We condense the aforementioned criteria to a general
framework that, in principle, can be applied in any dimension
to meshes $\T$ consisting of arbitrary elements, with or without
hanging nodes, and of arbitrary polynomial order $p\in\N_0$.
We briefly show that for the non-conforming case
$\V_h\not\subset H^1(\Omega)$ stability is straight-forward and
requires no additional assumptions.
We specify criteria for the $H^1$-stability of regular and
$1$-irregular meshes in 2D, consisting of triangles,
general quadrilaterals, mixtures thereof, with $0$, $1$ or $2$
handing nodes per element, for various polynomial degrees $p\in\N$.
Our \textbf{main results} are
\Cref{thm:h1stab,thm:stabqrb}
where we show that the adaptive
refinement strategies for quadrilaterals {\QRG} and {\QRB}
from \cite{BankRG,Funken2020,Kobbelt} are $H^1$-stable
for polynomial degrees $p=2,\ldots,9$.

\subsection*{Outline}

In \Cref{sec:framework}, we discuss a general framework for verifying $H^1$-stability.
We show in \Cref{sec:nonconform} that the non-conforming case does not
require additional assumptions.
In \Cref{sec:comp}, we specify computable criteria and comment in more detail
on some practical aspects of verifying $H^1$-stability.
Our main results are contained in \Cref{sec:main}, where we state criteria
for $H^1$-stability of general triangular/quadrilateral meshes in 2D, recall
adaptive refinement strategies from \cite{Funken2020} and
prove $H^1$-stability for {\QRG} and {\QRB}.
In \Cref{app:tables}, we list tables of eigenvalues required
for verifying $H^1$-stability criteria. The corresponding code
can be found in \cite{codes}.

\subsection*{Notation}

We use $A\lesssim B$ for quantities $A,B\in\R$ to indicate
$A\leq CB$ for some constant $C\geq 0$ independent of $A$ or $B$.
Similarly $A\gtrsim B$ and $A\sim B$ if both $\lesssim$ and $\gtrsim$ hold.
We use the following shorthand notation
\begin{alignat*}{5}
    \norm{u}[0]&:=\norm{u}[L^2(\Omega)],\quad
    &&\norm{u}[1]&&:=\norm{u}[H^1(\Omega)],\quad
    &&\norm{u}[0, T]&&:=\norm{u}[L^2(T)],\\
    \norm{h^{-1}u}[0]^2&:=\sum_{T\in\T}h_T^{-2}\norm{u}[0,T]^2,\quad
    &&\inp{u}{v}_{0}&&:=\inp{u}{v}_{L^2(\Omega)},\quad
    &&\inp{u}{v}_{T}&&:=\inp{u}{v}_{L^2(T)}.
\end{alignat*}
Finally, we use $|T|$ to denote the Lebesgue $\R^d$-measure of $T$
and $\#\T$ to denote the standard counting measure of $\T$.
\section{A General Framework for $H^1$-Stability}\label{sec:framework}

There are several sufficient criteria for the $H^1$-stability
of the $L^2$-projection available in the literature, see
\cite{CC,CT,BPS,BY}.
These criteria were successfully applied to triangular meshes in 2D,
see \cite{CCRGB,DirkNVB,GaspozNVB}.
All of these criteria have a common underlying idea as explained
in \Cref{sec:prwork}.

In this section, we condense all of the aforementioned criteria to
a single unifying framework which can be
applied to either triangular or quadrilateral meshes, mixtures thereof, or
more general meshes.

\subsection{The Mesh and Finite Element Space}

Let $\Omega\subset\R^d$ be a bounded domain and
$\T:=\set{T_1,\ldots, T_N}$, $N\in\N$ a finite set of closed convex polytopes $T_i\subset\Omega$ which we refer to as
\emph{elements}. We make the following assumptions on $\T$.

\begin{definition}[Admissible Mesh]
    We call a mesh $\T$ \emph{admissible} if
    \begin{enumerate}[label=(\roman*)]
        \item    it is \emph{exact}, i.e., $\overline{\Omega}=\bigcup_{T\in\T}T$,
        \item    the elements $T\in\T$ are non-empty
                    $T\neq\emptyset$ and \emph{non-overlapping}, i.e.,
                    \begin{align*}
                        \interior{T_i}\cap\interior{T_j}=\emptyset,\quad T_i\neq T_j,    
                    \end{align*}
                    where $\interior{T}$ denotes the interior of $T$;           
        \item    the mesh $\T$ is \emph{shape-regular}. That is,
                    for any $T\in\T$, let $\rho_T$ denote the diameter of the largest
                    ball still contained in $T$ and let $h_T$ denote the diameter of
                    $T$
                    \begin{align*}
                        h_T := \diam(T) := \sup_{x,y\in T}\norm{x-y}[2].    
                    \end{align*}
                    Then, we assume $\rho_T\sim h_T$,
                    with a constant independent of $T$.
                    This also implies
                    $h_T^d\sim |T|$, where we use $|T|$ to denote the Lebesgue
                    $\R^d$-measure of $T$.                                 
    \end{enumerate}
\end{definition}

Let $H^1(\Omega)$ denote the Sobolev space of square-integrable functions
on $\Omega$
with one weak square-integrable derivative.
We define the sets of \emph{complete} and \emph{incomplete}
polynomials over an element $T$, respectively
\begin{align}\label{eq:setspoly}
    \P_p(T)&:=\linspan\set{x\mapsto x_1^{p_1}\cdots x_d^{p_d}:\;p_i\in\N_0,\;
    \sum_{i=1}^dp_i\leq p,\; x=(x_1,\ldots,x_d)^\intercal\in T},\\
    \Q_p(T)&:=\linspan\set{x\mapsto x_1^{p_1}\cdots x_d^{p_d}:\;p_i\in\N_0,\;
    p_i\leq p,\; x=(x_1,\ldots,x_d)^\intercal\in T}.\notag
\end{align}

\begin{definition}[Conforming FE Space]\label{def:FE}
    Let $\T$ be an admissible mesh and fix a polynomial degree $p\in\N$.
    A conforming piece-wise polynomial FE space of polynomial
    degree $p$ is any space $\V_h=\V_h(\T, p)$ such that
    \begin{align*}
        \V_h=\V_h(\T,p)=
        \set{v_h\in H^1(\Omega):(v_h)_{|T}\in\poly_p(T),\;T\in\T},
    \end{align*}
    where $\poly_p(T)\in\set{\P_p(T),\Q_p(T)}$, i.e.,
    $(v_h)_{|T}$ is either\footnote{Note that this permits
    some ambiguity as there can be (finitely many) spaces $\V_h(\T, p)$ that
    are conforming piece-wise polynomial FE spaces of degree $p$.
    The same is true since we, strictly speaking, allow for
    $\V_h\subset H^s(\Omega)$, $s>1$.
    However, this does not impact the theory presented here,
    as long as we \emph{fix} the choice of $\V_h$.
    In \Cref{sec:main}, this choice will be made clear.} a complete or incomplete
    polynomial of degree $p$.
    We denote the associated $L^2$-orthogonal projection by
    $\proj_h:L^2(\Omega)\rightarrow \V_h$.
    Here the subscript $h$ is used to indicate FE spaces and functions.
\end{definition}

We use $\mc V(\T)$ to denote the set of vertices
of all elements $T$, i.e., the set of extreme points. We associate to each
FE space $\V_h$ a finite set of (Lagrange) nodes $\mc N:=\mc N(\T)\subset\Omega$
and a set of basis functions
\begin{align*}
    \Phi_{\mc N}:=\set{\varphi_a:\;a\in\mc N(\T)},
\end{align*}
such that $\V_h=\linspan\Phi_{\mc N}$.
We set
$\mc V(T):=\mc V(\T)\cap T$.
Finally, for $S_a^T:=\set{x\in T:\;\varphi_a(x)\neq 0}$, we define
\begin{align}\label{eq:defNT}
    \mc N(T):=\set{a\in\mc N:\;|S_a^T|>0}.
\end{align}

\begin{remark}
    \leavevmode
    \begin{enumerate}[label=(\roman*)]
        \item    The conformity $\V_h\subset H^1(\Omega)$ is necessary
                    to ensure $\norm{\nabla\proj_hu}[0]$ is well-defined.
                    Since functions in $\V_h$ are polynomial except on a set of Lebesgue
                    measure zero, $v_h\in H^1(\Omega)$ holds if and only if
                    $v_h\in C(\Omega)$, or, equivalently, if $v_h$ is continuous
                    along the interior boundary
                    $\partial T\setminus\partial\Omega$ of $T\in\T$.
        \item    For ease of notation we do not consider boundary conditions
                    on $\partial\Omega$, but all of the subsequent results
                    apply to this case as well.
        \item    An important consequence of shape-regularity is that
                    for any $T\in\T$ and any $v_h\in\V_h$,
                    \begin{align*}
                        \norm{\nabla v_h}[0,T]\lesssim h_T^{-1}\norm{v_h}[0,T].
                    \end{align*}
    \end{enumerate}
\end{remark}

\subsection{General Criteria for $H^1$-Stability}\label{sec:gencrit}

The stability of $\proj_h$ in the $H^1$-norm
can be reduced to the stability of $\proj_h$ in a weighted $L^2$-norm
through the use of a stable interpolation operator.

\begin{assumption}[Stable Quasi-Interpolation Operator]\label{ass:clement}
    We assume the existence of a (possibly non-linear) mapping
    $Q_h:H^1(\Omega)\rightarrow\V_h$ that satisfies
    \begin{align}\label{eq:clement}
        \norm{\nabla Q_hu}[0]+\norm{h^{-1}(u-Q_hu)}[0]\lesssim\norm{u}[1],
        \quad\forall u\in H^1(\Omega),
    \end{align}
    where $h^{-1}:\Omega\rightarrow\R$ is the piece-wise constant
    function $h^{-1}:=\sum_{T\in\T}h_T^{-1}\indicator_T$,
    with the indicator functions $\indicator_T$.
\end{assumption}

An example of such a mapping $Q_h$ is the Cl\'{e}ment operator
\cite{Clement}
and variants thereof, see also \cite{SZ}.
Specifically for this work we can use the flexible construction
from \cite{CCClement}, which applies to both triangular and quadrilateral meshes,
with or without hanging nodes.

\begin{lemma}[Stability in a Weighted $L^2$-Norm]\label{lemma:weighted}
    Let $\T$ be an admissible mesh and let \Cref{ass:clement} hold.
    If $\proj_h:L^2(\Omega)\rightarrow\V_h$ satisfies
    \begin{align}\label{eq:weighted}
        \norm{h^{-1}\proj_h u}[0]\lesssim\norm{h^{-1}u}[0],
        \quad\forall u\in H^1(\Omega),
    \end{align}
    then $\norm{\nabla\proj_h u}[0]\lesssim\norm{u}[1]$ holds.
\end{lemma}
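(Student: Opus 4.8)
The plan is to exploit the projection property of $\proj_h$ together with the stable quasi-interpolant $Q_h$ furnished by \Cref{ass:clement}. Since $Q_hu\in\V_h$ and $\proj_h$ is the $L^2$-orthogonal projection onto $\V_h$, it leaves $Q_hu$ invariant, i.e.\ $\proj_hQ_hu=Q_hu$. Because $\proj_h$ is linear, this yields the decomposition
\begin{align*}
    \proj_hu=Q_hu+\proj_h(u-Q_hu),
\end{align*}
which holds even though $Q_h$ itself may be non-linear. By the triangle inequality,
\begin{align*}
    \norm{\nabla\proj_hu}[0]\leq\norm{\nabla Q_hu}[0]+\norm{\nabla\proj_h(u-Q_hu)}[0],
\end{align*}
and the first term is controlled directly by the first summand in \eqref{eq:clement}, giving $\norm{\nabla Q_hu}[0]\lesssim\norm{u}[1]$.

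For the second term the idea is to trade the gradient for a factor $h_T^{-1}$ on each element. Since $\proj_h(u-Q_hu)\in\V_h$, the inverse estimate supplied by shape-regularity (the remark following \Cref{def:FE}) gives, element-wise,
\begin{align*}
    \norm{\nabla\proj_h(u-Q_hu)}[0,T]\lesssim h_T^{-1}\norm{\proj_h(u-Q_hu)}[0,T].
\end{align*}
Squaring, summing over $T\in\T$, and recalling the definition of the weighted norm then yields $\norm{\nabla\proj_h(u-Q_hu)}[0]\lesssim\norm{h^{-1}\proj_h(u-Q_hu)}[0]$.

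At this point I would invoke the hypothesis \eqref{eq:weighted}, applied to the function $u-Q_hu\in H^1(\Omega)$, to obtain $\norm{h^{-1}\proj_h(u-Q_hu)}[0]\lesssim\norm{h^{-1}(u-Q_hu)}[0]$; finally the second summand in \eqref{eq:clement} bounds $\norm{h^{-1}(u-Q_hu)}[0]\lesssim\norm{u}[1]$. Chaining these estimates controls the second term by $\norm{u}[1]$ as well, and combining with the bound on $\norm{\nabla Q_hu}[0]$ completes the argument.

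I do not expect a genuine obstacle here; the proof is essentially bookkeeping built on three ingredients. The only point requiring care is that the weighted stability \eqref{eq:weighted} must be applied to the \emph{difference} $u-Q_hu$ rather than to $u$ directly—this is precisely what converts the hypothesis, via the approximation property of $Q_h$, into a bound by the full $H^1$-norm. Equally essential is the role of shape-regularity, since it is the inverse estimate that bridges the $H^1$-seminorm and the weighted $L^2$-norm in which stability is assumed.
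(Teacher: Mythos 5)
Your proof is correct and follows essentially the same route as the paper: decompose $\proj_h u=Q_hu+\proj_h(u-Q_hu)$ using $\proj_hQ_hu=Q_hu$, bound the first term by \eqref{eq:clement}, and bound the second via the shape-regularity inverse estimate, the hypothesis \eqref{eq:weighted} applied to $u-Q_hu$, and the approximation property in \eqref{eq:clement}. The paper compresses all of this into a single displayed chain of inequalities, but the ingredients and their order are identical.
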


\begin{proof}
    This is a simple consequence of the triangle inequality,
    property \eqref{eq:clement} and shape-regularity
    \begin{align*}
        \norm{\nabla\proj_h u}[0]=\norm{\nabla\proj_h(u-Q_hu)
        +\nabla Q_h u}[0]\lesssim
        \norm{h^{-1}(u-Q_hu)}[0]+\norm{\nabla Q_hu}[0]\lesssim
        \norm{u}[1].
    \end{align*}
\end{proof}

We are thus left with showing \eqref{eq:weighted}.
To this end, we will use the following general criteria.

\begin{assumption}[$H^1$-stability Criteria]\label{ass:gencrit}
    We assume there exist (possibly non-linear) mappings $H_+$,
    $H_-:\V_h\rightarrow\V_h$ that satisfy
    \begin{enumerate}[label=(C\arabic*)]
        \item\label{C1}    the mapping $H_+$ is invertible and the inverse
                    $H_+^{-1}$ satisfies
                    \begin{align*}
                        \norm{h^{-1}v_h}[0]\lesssim
                        \norm{H_+^{-1}v_h}[0],\quad\forall v_h\in\V_h,
                    \end{align*}
        \item\label{C2}    the mapping $H_-$ satisfies
                    \begin{align*}
                        \norm{hH_-v_h}[0]\lesssim\norm{v_h}[0],\quad\forall
                        v_h\in\V_h,
                    \end{align*}
        \item\label{C3}    the mappings $H_+$ and $H_-$ jointly satisfy
                    \begin{align*}
                        \norm{v_h}[0]^2\lesssim
                        \inp{H_+v_h}{H_-v_h}_{0},                        
                        \quad\forall v_h\in\V_h.
                    \end{align*}
    \end{enumerate}
\end{assumption}

Different versions of the following theorem can be found in \cite{CC, BPS, Siebert}
for specific choices of $H_+$, $H_-$. We include the simple proof here
for completeness.

\begin{theorem}[$H^1$-stability \cite{CC, BPS, Siebert}]\label{thm:genh1stab}
    Let $\T$ be an admissible mesh and let \Cref{ass:clement}
    and \Cref{ass:gencrit} hold. Then, the $L^2$-orthogonal
    projection $\proj_h:L^2(\Omega)\rightarrow\V_h$ is $H^1$-stable
    in the sense
    \begin{align*}
        \norm{\nabla\proj_h u}[0]\lesssim\norm{u}[1],\quad\forall u\in H^1(\Omega).
    \end{align*}
\end{theorem}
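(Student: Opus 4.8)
The plan is to reduce the claimed $H^1$-stability to the weighted $L^2$-estimate \eqref{eq:weighted} via \Cref{lemma:weighted}, which already does the work of converting a weighted $L^2$-bound into the $H^1$-bound using the stable quasi-interpolation operator from \Cref{ass:clement}. Thus the entire task becomes: assuming \ref{C1}, \ref{C2}, \ref{C3}, establish
\begin{align*}
    \norm{h^{-1}\proj_h u}[0]\lesssim\norm{h^{-1}u}[0],\quad\forall u\in H^1(\Omega).
\end{align*}
Once this inequality is in hand, \Cref{lemma:weighted} immediately yields $\norm{\nabla\proj_h u}[0]\lesssim\norm{u}[1]$, which is the conclusion of the theorem.

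To prove the weighted estimate, I would set $v_h:=\proj_h u\in\V_h$ and chain the three criteria together. First, by \ref{C1} applied to $v_h$, I have $\norm{h^{-1}v_h}[0]\lesssim\norm{H_+^{-1}v_h}[0]$, so it suffices to control $\norm{H_+^{-1}v_h}[0]$. The natural move is to square it and insert \ref{C3}: writing $w_h:=H_+^{-1}v_h$ so that $H_+w_h=v_h$, criterion \ref{C3} gives $\norm{w_h}[0]^2\lesssim\inp{H_+w_h}{H_-w_h}_0=\inp{v_h}{H_-w_h}_0$. The key observation is that $v_h=\proj_h u$ is the $L^2$-projection of $u$, and $H_-w_h\in\V_h$, so by self-adjointness and idempotency of $\proj_h$ we may replace $v_h$ in the inner product by $u$: indeed $\inp{\proj_h u}{H_-w_h}_0=\inp{u}{\proj_h H_-w_h}_0=\inp{u}{H_-w_h}_0$ whenever $H_-w_h\in\V_h$. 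This is the step where the projection property enters decisively and where the weighted structure will be exploited.

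Next I would introduce the weights by hand: write $\inp{u}{H_-w_h}_0=\inp{h^{-1}u}{hH_-w_h}_0$ and apply Cauchy--Schwarz to get $\inp{u}{H_-w_h}_0\leq\norm{h^{-1}u}[0]\,\norm{hH_-w_h}[0]$. Now criterion \ref{C2}, applied to $w_h$, bounds $\norm{hH_-w_h}[0]\lesssim\norm{w_h}[0]$. Combining these gives $\norm{w_h}[0]^2\lesssim\norm{h^{-1}u}[0]\,\norm{w_h}[0]$, and dividing through by $\norm{w_h}[0]$ (the case $w_h=0$ being trivial) yields $\norm{w_h}[0]\lesssim\norm{h^{-1}u}[0]$. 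Tracing back through \ref{C1}, we obtain $\norm{h^{-1}\proj_h u}[0]\lesssim\norm{H_+^{-1}\proj_h u}[0]=\norm{w_h}[0]\lesssim\norm{h^{-1}u}[0]$, which is precisely \eqref{eq:weighted}.

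The main obstacle I anticipate is the interchange in the inner product $\inp{\proj_h u}{H_-w_h}_0=\inp{u}{H_-w_h}_0$: this requires $H_-w_h$ to lie in $\V_h$ (guaranteed since $H_-:\V_h\to\V_h$) together with the self-adjointness of $\proj_h$ as an orthogonal projection, and care is needed because $H_+$ and $H_-$ are only assumed to be \emph{possibly non-linear} maps, so one cannot naively manipulate them as operators. The argument above sidesteps this by never needing linearity of $H_\pm$ themselves -- only the linearity and self-adjointness of $\proj_h$ and the scalar inequalities \ref{C1}--\ref{C3} evaluated at the specific elements $v_h$ and $w_h$. Verifying that the chain of inequalities survives the non-linearity of $H_\pm$ -- in particular that \ref{C1} is invoked at $v_h$ and \ref{C2}, \ref{C3} at $w_h=H_+^{-1}v_h$ consistently -- is the delicate bookkeeping, but no genuine difficulty beyond that should arise.
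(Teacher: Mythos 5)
Your proof is correct and is essentially identical to the paper's: both reduce to the weighted estimate \eqref{eq:weighted} via \Cref{lemma:weighted}, apply \ref{C3} at $w_h=H_+^{-1}\proj_h u$, use $L^2$-orthogonality to swap $\proj_h u$ for $u$, insert the weights $h^{-1}$ and $h$, apply Cauchy--Schwarz and \ref{C2}, cancel one factor of $\norm{w_h}[0]$, and finish with \ref{C1}. Your write-up is, if anything, slightly cleaner, since the paper's displayed chain contains a typo ($H_-H_+\proj_h u$ where $H_-H_+^{-1}\proj_h u$ is meant), which your explicit bookkeeping with $w_h$ avoids.
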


\begin{proof}
    By \ref{C2}, \ref{C3} and $L^2$-orthogonality of $\proj_h$
    \begin{align*}
        \norm{H_+^{-1}\proj_h u}[0]^2&\lesssim
        \inp{\proj_hu}{H_-H_+\proj_hu}_0=
        \inp{h^{-1}u}{hH_-H_+\proj_hu}_0
        \leq \norm{h^{-1}u}[0]\norm{hH_-H_+^{-1}\proj_hu}[0]\\
        &\lesssim\norm{h^{-1}u}[0]\norm{H_+^{-1}\proj_hu}[0],
    \end{align*}
    and thus $\norm{H_+^{-1}\proj_hu}[0]\lesssim\norm{h^{-1}u}[0]$.
    From \ref{C1} we get
    $\norm{h^{-1}\proj_ju}[0]\lesssim\norm{H_+^{-1}\proj_hu}[0]$.
    Together with \Cref{lemma:weighted}, this completes the proof.
\end{proof}

\begin{remark}[Alternative Criteria]
    There is an alternative to criteria \ref{C1} -- \ref{C3}, which requires
    fewer assumptions. Namely, suppose there exists a linear operator
    $H_-:\V_h\rightarrow\V_h$ that satisfies
    \begin{enumerate}
        \item    for any $v_h\in\V_h$
                    \begin{align*}
                        \norm{h^{-1}v_h}[0]\lesssim\norm{H_-v_h}[0],
                    \end{align*}
        \item    and for any $v_h\in\V_h$
                    \begin{align*}
                        \norm{h(H_-)^*H_-v_h}[0]\lesssim\norm{h^{-1}v_h}[0],
                    \end{align*}
                    where $(H_-)^*$ denotes the Hilbert adjoint of $H_-$.
    \end{enumerate}
    Then, analogously to \Cref{thm:genh1stab}, we can show
    \begin{align*}
        \norm{h^{-1}\proj_hu}[0]^2&\lesssim
        \norm{H_-\proj_hu}[0]^2=
        \inp{(H_-)^*H_-\proj_hu}{\proj_h u}_0=
        \inp{(H_-)^*H_-\proj_hu}{u}\\
        &=
        \inp{h(H_-)^*H_-\proj_hu}{h^{-1}u}\leq
        \norm{h(H_-)^*H_-\proj_hu}[0]\norm{h^{-1}u}[0]
        \lesssim\norm{h^{-1}\proj_hu}[0]\norm{h^{-1}u}[0].
    \end{align*}
    
    The issue with this criteria is that, having specified $H_-$, it requires
    computing the adjoint $(H_-)^*$.
    In particular, even if $H_-$ has a simple ``local'' definition\footnote{In the sense
    that will become clear in \Cref{sec:comp}.}, $H_-^*$ will still be non-local in general.
    In contrast, criteria \ref{C1} -- \ref{C3} allow for the flexibility of choosing
    a local map $H_+$.
\end{remark}

\subsection{The Non-Conforming Case}\label{sec:nonconform}

Alternatively, we could consider $H^1$-stability of non-conforming
FE spaces in the broken Sobolev norm.
The gradient $\nabla:H^1(\Omega)\rightarrow L^2(\Omega)$ is replaced
by the piece-wise gradient $(\nabla_{\T}u)_{|T}:=\nabla(u_{|T})$ for
every $T\in\T$. The broken Sobolev space is defined as
\begin{align*}
    H^1(\T):=\set{u\in L^2(\Omega):\;u_{|T}\in H^1(T),\;
    T\in\T},
\end{align*}
and the corresponding FE space as
\begin{align*}
    \V_h^B:=\set{(v_h)_{|T}\in\poly_p(T),\;T\in\T}.
\end{align*}
Then, the corresponding $L^2$-projection $\proj_h^B:L^2(\Omega)
\rightarrow\V_h^B$ is said to be $H^1$-stable if
\begin{align*}
    \norm{\nabla_{\T}\proj_h^Bu}[0]\lesssim\norm{u}[H^1(\T)],
    \quad\forall u\in H^1(\T).
\end{align*}
However, in this case $H^1$-stability is trivially satisfied for any
admissible mesh that satisfies \Cref{ass:clement}.

\begin{theorem}[$H^1$-stability Discontinuous FE]\label{thm:dg}
    Let $\T$ be an admissible mesh satisfying \Cref{ass:clement} (with
    the definition of norms adjusted accordingly). Then,
    the $L^2$-projection $\proj_h^B:L^2(\Omega)\rightarrow\V_h^B$
    is $H^1$-stable.
\end{theorem}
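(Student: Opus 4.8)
The plan is to exploit the one feature that distinguishes the broken space from the conforming one: since $\V_h^B$ imposes no continuity across interior faces, its basis may be chosen element-wise, so that the mass matrix of $\proj_h^B$ is block-diagonal. Consequently $\proj_h^B$ is \emph{local}: on each $T\in\T$ we have $(\proj_h^Bu)_{|T}=\proj_T(u_{|T})$, where $\proj_T:L^2(T)\to\poly_p(T)$ is the $L^2$-orthogonal projection onto the local polynomial space. The first step is simply to record this locality, since it is what trivializes the argument; the only facts needed are that an orthogonal projection onto a closed subspace is an $L^2(T)$-contraction, $\norm{\proj_T u}[0,T]\leq\norm{u}[0,T]$, and that the inverse inequality $\norm{\nabla v_h}[0,T]\lesssim h_T^{-1}\norm{v_h}[0,T]$ from shape-regularity holds for every polynomial on $T$.

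With locality in hand, the broken analogue of the weighted $L^2$-stability \eqref{eq:weighted} is immediate. Indeed, because $h_T$ is constant on each $T$ and $\proj_T$ is a contraction,
\begin{align*}
    \norm{h^{-1}\proj_h^Bu}[0]^2=\sum_{T\in\T}h_T^{-2}\norm{\proj_T u}[0,T]^2
    \leq\sum_{T\in\T}h_T^{-2}\norm{u}[0,T]^2=\norm{h^{-1}u}[0]^2.
\end{align*}
This is exactly the estimate whose failure causes all the difficulty in the conforming case: there $\proj_h$ is non-local and one cannot pull the factor $h_T^{-2}$ through the projection, which is precisely why a growth/decay bound such as \eqref{eq:growth} is needed.

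The second step is to pass from this weighted $L^2$-bound to the broken $H^1$-bound by repeating the proof of \Cref{lemma:weighted} with $\nabla$ replaced by $\nabla_\T$. Let $Q_h$ be the quasi-interpolation operator from the adjusted \Cref{ass:clement}; since $\V_h\subset\V_h^B$ we have $Q_hu\in\V_h^B$, hence $\proj_h^BQ_hu=Q_hu$ and $\nabla_\T\proj_h^Bu=\nabla_\T\proj_h^B(u-Q_hu)+\nabla_\T Q_hu$. Applying the triangle inequality, the inverse inequality to the piecewise polynomial $\proj_h^B(u-Q_hu)$, the weighted stability just established, and finally the adjusted estimate \eqref{eq:clement}, yields
\begin{align*}
    \norm{\nabla_\T\proj_h^Bu}[0]
    &\lesssim\norm{h^{-1}\proj_h^B(u-Q_hu)}[0]+\norm{\nabla_\T Q_hu}[0]\\
    &\lesssim\norm{h^{-1}(u-Q_hu)}[0]+\norm{\nabla_\T Q_hu}[0]
    \lesssim\norm{u}[H^1(\T)].
\end{align*}

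I expect no genuine obstacle here: the whole content of the statement is that locality removes the need for any decay estimate. The only points requiring care are the justification of the block-diagonal structure of $\proj_h^B$ (and hence of the pointwise identity $(\proj_h^Bu)_{|T}=\proj_T(u_{|T})$) and the elementwise contraction; once these are in place, everything else is a verbatim transcription of \Cref{lemma:weighted} into the broken norms.
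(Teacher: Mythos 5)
Your proposal is correct and follows essentially the same route as the paper: establish that $\proj_h^B$ acts element-wise, deduce the weighted $L^2$-bound $\norm{h^{-1}\proj_h^Bu}[0]\leq\norm{h^{-1}u}[0]$ from the local contraction property, and conclude via the broken-norm version of \Cref{lemma:weighted}. The only (immaterial) difference is how locality is justified: you invoke the orthogonal direct-sum structure of $\V_h^B$ directly, whereas the paper derives $(\proj_h^Bu_T)_{|\bar T}=0$ for $\bar T\neq T$ by comparing the element-wise best approximation with $\proj_h^Bu_T$ and using uniqueness of the orthogonal projection.
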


\begin{proof}
    Let $u_T\in L^2(\Omega)$ be an arbitrary $L^2$-function supported
    on $T\in\T$. Let $w_T$ be the $L^2(T)$-orthogonal projection
    of $u_T$ onto $\poly_p(T)$, which we extend with zero on
    $\Omega\setminus T$. Then, clearly $w_T\in\V_h^B$ and, on one hand,
    $\norm{u_T-w_T}[0]\geq \norm{u_T-\proj_h^Bu_T}[0]$.
    On the other hand,
    \begin{align*}
        \norm{u_T-w_T}[0]^2=\sum_{\bar T\in\T}\norm{u_T-w_T}[0,\bar T]^2
        \leq \sum_{\bar T\in\T}\norm{u_T-\proj_h^Bu_T}[0,\bar T]^2
        =\norm{u_T-\proj_h^Bu_T}[0]^2,
    \end{align*}
    and thus $\norm{u_T-w_T}[0]=\norm{u_T-\proj_h^Bu_T}[0]$.
    Since $\V_h^B\subset L^2(\Omega)$ is compact, the orthogonal projection
    is unique and thus $w_T=\proj_h^Bu_T$.
    
    In particular, this implies
    \begin{align*}
        \norm{\proj_h^Bu_T}[0,\bar T]=0,\quad\text{for any }
        \bar T\neq T.
    \end{align*}
    Since $\proj_h$ is an orthogonal projection, we also have
    $\norm{\proj_h^Bu_T}[0]\leq\norm{u_T}[0]$.
    
    For any $u\in H^1(\T)$, we can write $u=\sum_{T\in\T}u_T$ with
    $u_T:=u\indicator_T$ and observe
    $\proj_h^Bu=\sum_{T\in\T}\proj_h^Bu_T$.
    Thus $\norm{\proj_h^Bu}[0,T]=\norm{\proj_h^Bu_T}[0]$ and
    \begin{align*}
        \norm{h^{-1}\proj_h^Bu}[0]^2=\sum_{T\in\T}
        h_T^{-2}\norm{\proj_h^Bu}[0,T]^2=
        \sum_{T\in\T}h_T^{-2}\norm{\proj_h^Bu_T}[0]^2
        \leq\sum_{T\in\T}h_T^{-2}\norm{u_T}[0]^2=
        \norm{h^{-1}u}[0]^2.    
    \end{align*}
    Together with \Cref{lemma:weighted} (adjusted for the broken Sobolev norm),
    this shows the $H^1$-stability of $\proj_h^B$.     
\end{proof}
\section{Computable Criteria for $H^1$-Stability}\label{sec:comp}

In this section, we discuss a particular choice for the mappings
$H_+$ and $H_-$. First, we briefly motivate how a
``practical'' choice for $H_+$, $H_-$ would look like.

Let $\{\T_n\}_{n=0}^\infty$ be a sequence of finer meshes
with $\#\T_n\rightarrow\infty$ and let $\proj_{h_n}$ denote
the corresponding $L^2$-projections.
For this particular sequence, $H^1$-stability means we have
\begin{align*}
    \norm{\nabla\proj_{h_n}u}[0]\leq C\norm{u}[1],\quad\forall u\in H^1(\Omega),
\end{align*}
for some $C\neq C(n)$ independent of $n\in\N$.
The sequence $\{\T_n\}_{n=0}^\infty$ may depend, among other things,
on the initial discretization, the problem to be solved (such as a partial differential
equation), the choice of marking strategy (e.g., error estimator),
the choice of adaptive refinement strategy and so on.

For this reason disproving $H^1$-stability can be particularly difficult:
even if $C=C(n)\rightarrow\infty$ for some artificially constructed sequence
$\{\T_n\}_{n=0}^\infty$, we can still have a class of problems and a set
of marking rules for which $\{\T_n\}_{n=0}^\infty$ will always remain
$H^1$-stable.
Thus, proofs of $H^1$-stability as in \cite{CCRGB, GaspozNVB, DirkNVB}
focus solely on the
refinement strategy (e.g., RGB, NVB, RG, etc.). This
necessarily results in the (much stronger)
\emph{local} conditions, i.e.,
the conditions of \Cref{ass:gencrit} are replaced with
\begin{align}\label{eq:localC}
    \norm{h^{-1}v_h}[0,T]\lesssim \norm{H_+^{-1}v_h}[0,T],\quad
    \norm{hH_-v_h}[0,T]\lesssim\norm{v_h}[0,T],\quad
    \norm{v_h}[0,T]^2\lesssim\inp{H_+v_h}{H_-v_h}_{0,T},
\end{align}
for any $T\in\T$ and any $v_h\in\V_h$.

\subsection{Verifying \ref{C1} -- \ref{C2}}

In \cite{CC, Siebert, BPS} the authors consider locally defined weight functions as follows.

\begin{lemma}[Choice of $H_+$, $H_-$ \cite{CC, Siebert, BPS}]\label{lemma:choiceh}
    Let $\set{h_a^+:\;a\in\mc N}$ and $\set{h_a^-:\;a\in\mc N}$ be sets
    of positive weights $h_a^+,h_a^->0$.
    We define the mapping $H_+:\V_h\rightarrow\V_h$ as
    \begin{align*}
        H_+(v_h)=H_+\left(\sum_{a\in\mc N}c_a\varphi_a\right):=
        \sum_{a\in\mc N}h_a^+c_a\varphi_a,    
    \end{align*}
    and analogously $H_-:\V_h\rightarrow\V_h$.
    If the weights satisfy for any $T\in\T$
    \begin{align*}
        h_a^+\sim h_T,\quad h_a^{-1}\sim h_T^{-1},\quad
        \forall a\in\mc N(T),    
    \end{align*}
    then $H_+$ and $H_-$ satisfy \ref{C1} -- \ref{C2}.         
\end{lemma}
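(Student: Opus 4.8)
The plan is to exploit that both $H_+$ and $H_-$ act \emph{diagonally} in the nodal basis $\Phi_{\mc N}$, so that all the required estimates reduce to a single local norm equivalence between the $L^2(T)$-norm of a finite element function and a scaled Euclidean norm of its nodal coefficients. First I would record that, since $h_a^+>0$, the map $H_+$ is a diagonal operator with positive entries and is therefore invertible on $\V_h$, with $H_+^{-1}\left(\sum_{a}c_a\varphi_a\right)=\sum_{a}(h_a^+)^{-1}c_a\varphi_a$; this already supplies the invertibility demanded in \ref{C1}. (I read the second hypothesis as $h_a^-\sim h_T^{-1}$.)

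The key ingredient is the local equivalence
\begin{align*}
    \norm{v_h}[0,T]^2\sim h_T^d\sum_{a\in\mc N(T)}c_a^2,
    \qquad v_h=\sum_{a\in\mc N(T)}c_a\varphi_a \text{ on } T,
\end{align*}
with constants independent of $T$. I would prove this by transforming $T$ to a fixed reference element $\hat T$ of the corresponding type (reference simplex or unit square): on $\hat T$ the mass matrix $\left(\int_{\hat T}\hat\varphi_a\hat\varphi_b\right)_{a,b}$ is a fixed symmetric positive definite matrix whose extreme eigenvalues depend only on $p$ and $\hat T$, giving $\norm{\hat v}[0,\hat T]^2\sim\sum_a c_a^2$; pulling back through the element map and invoking shape-regularity to control the Jacobian determinant by $|T|\sim h_T^d$ uniformly then yields the claim. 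I expect this to be the main obstacle: for quadrilateral elements the reference map is only bilinear, so the Jacobian is non-constant across $T$, and one must use shape-regularity to bound it above and below by $\sim h_T^d$ uniformly in $T$, rather than relying on a single affine factor.

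With the local equivalence in hand, the two criteria follow by direct computation. For \ref{C1}, summing over $T$ gives
\begin{align*}
    \norm{h^{-1}v_h}[0]^2=\sum_{T}h_T^{-2}\norm{v_h}[0,T]^2
    \sim\sum_T h_T^{d-2}\sum_{a\in\mc N(T)}c_a^2,
\end{align*}
while, using $h_a^+\sim h_T$ for $a\in\mc N(T)$,
\begin{align*}
    \norm{H_+^{-1}v_h}[0]^2\sim\sum_T h_T^{d}\sum_{a\in\mc N(T)}(h_a^+)^{-2}c_a^2
    \sim\sum_T h_T^{d-2}\sum_{a\in\mc N(T)}c_a^2.
\end{align*}
The two right-hand sides coincide up to constants, so in fact $\norm{h^{-1}v_h}[0]\sim\norm{H_+^{-1}v_h}[0]$, which is even stronger than \ref{C1}. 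For \ref{C2} the same mechanism with $h_a^-\sim h_T^{-1}$ yields $\norm{hH_-v_h}[0]^2\sim\sum_T h_T^{d+2}\sum_{a\in\mc N(T)}(h_a^-)^2c_a^2\sim\sum_T h_T^{d}\sum_{a\in\mc N(T)}c_a^2\sim\norm{v_h}[0]^2$, establishing \ref{C2}. I would emphasize that no control of node multiplicity is needed for these comparisons, since on each side one has the \emph{same} double sum over pairs $(T,a)$; shape-regularity enters only through the local equivalence above.
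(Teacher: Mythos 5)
Your proof is correct and follows essentially the same route as the paper: reduce to each element via the index set $\mc N(T)$ and replace $h_T^{\pm1}$ by $h_a^{\pm}$ using the assumed equivalences. The only difference is that you make explicit the local equivalence $\norm{v_h}[0,T]^2\sim h_T^d\sum_{a\in\mc N(T)}c_a^2$ (basis stability via the reference mass matrix), which the paper's one-line proof uses implicitly when it passes from $\norm{\sum_a h_T^{-1}c_a\varphi_a}[0,T]$ to $\norm{\sum_a h_a^{-1}c_a\varphi_a}[0,T]$ — a step that indeed requires exactly this well-conditioning of the local basis, so your added detail is a faithful elaboration rather than a new argument.
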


\begin{proof}
    By the definition of the index set $\mc N(T)$ (see
    \eqref{eq:defNT}), we have for any $T\in\T$ and any
    $v_h=\sum_{a\in\mc N}c_a\varphi_a\in\V_h$
    \begin{align*}
        \norm{h^{-1}v_h}[0,T]=
        \norm{\sum_{a\in\mc N(T)}h_T^{-1}c_a\varphi_a}[0,T]
        \lesssim
        \norm{\sum_{a\in\mc N(T)}h_a^{-1}c_a\varphi_a}[0,T]
        =\norm{H_+^{-1}v_h}[0,T],
    \end{align*}
    and consequently $\norm{h^{-1}v_h}[0]\lesssim
    \norm{H_+^{-1}v_h}[0]$. Analogously for \ref{C2}.
\end{proof}

\subsection{Verifying \ref{C3}}

The last condition can be verified by solving a local generalized
eigenvalue problem. We namely have
\begin{align*}
    \norm{v_h}[0, T]^2\lesssim
    \inp{H_+v_h}{H_-v_h}_{0,T}
    =\inp{v_h}{(H_+)^*H_-v_h}_{0,T}
    =\inp{v_h}{\frac{1}{2}(H_+^*H_-+H_-^*H_+)v_h}_{0,T}.
\end{align*}
This amounts to assembling the local mass-matrix $M(T)$
\begin{align*}
    (M(T))_{a,b}:=\inp{\varphi_a}{\varphi_b}_{0,T}\quad
    a,b\in\mc N(T),
\end{align*}
the weighted matrix $A(T)$
\begin{align*}
    (A(T))_{a,b}:=\frac{1}{2}\left(h_a^+h_b^-+
    h_b^+h_a^-\right)(M(T))_{a,b},\quad a,b\in\mc N(T),
\end{align*}
and solving the generalized eigenvalue problem
\begin{align}\label{eq:evalue}
    A(T)x=\lambda M(T)x,\quad x\in\R^{\#\mc N(T)}.
\end{align}
If the smallest eigenvalue of \eqref{eq:evalue} $\lambda_{\min}>0$ is positive,
then \ref{C3} holds for a positive constant $C:=\lambda_{\min}^{-1}$.
Moreover, if $T$ can be obtained by an affine transformation
$B_T:\hat T\rightarrow T$ from some reference element $\hat T$,
then the minimal eigenvalue in \eqref{eq:evalue} does not
depend on $T$.
We comment more on this in \Cref{rem:calc}
and we address the case of non-linear transformations
$B_T:\hat T\rightarrow T$ in \Cref{sec:nonlinear}.

\subsection{Weights Based on the Refinement Level}

In \cite{Siebert, CC, GaspozNVB} the authors consider weights based on the refinement level
of an element and a distance function.
We will use the same type of function for this work.

\begin{definition}[Generation of an Element]
    For a sequence $\{\T_n\}_{n=0}^\infty$,
    we assume\footnote{This is the case for most common refinement
    strategies and also holds for the strategy considered in \Cref{sec:review}.}
    that any $T\in\T_n$, for any $n\in\N_0$, has a macro element
    $K_T\in\T_0$ such that $T\subset K_T$. The generation $\gen(T)\geq 0$
    of an element $T\in\T_n$ is defined as
    \begin{align*}
        \gen(T):=\log_2\left(\frac{|K_T|}{|T|}\right).
    \end{align*}
\end{definition}

\begin{definition}[Distance Function]
    For two nodes $a,b\in\mc N(\T)$, $a\neq b$, the distance
    function $\dist(a,b)\in\N_0$ is defined as the minimal $J\in\N$
    such that there exists elements $T_1,\ldots,T_J$ with
    \begin{align*}
        a\in T_1, T_1\cap T_2\neq\emptyset,\ldots,
        T_{J-1}\cap T_J\neq\emptyset,T_J\ni b.
    \end{align*}
    For $a=b$, set $\dist(a,a)=0$.
    The distance to an element is defined as
    \begin{align*}
        \dist(a, T):=\min\set{\dist(a,b):\;b\in\mc N(T)}.
    \end{align*}
\end{definition}

With this we can finally define

\begin{definition}[Weight Function \cite{Siebert}]\label{def:siebert}
    For any $z\in\mc V(\T)$ and a fixed parameter $\mu>0$, define the weight
    \begin{align*}
        h_z:=\min\set{2^{(\mu\dist(z,T)-\gen(T))/d}:\;T\in\T}.
    \end{align*}
    Then, for any $a\in\mc N(\T)\cap\mc V(\T)$,
    $h_a^+$ is defined as above, while
    $h_a^-:=(h_a)^{-1}$.
    For any $a\in\mc N(\T)\setminus\mc V(\T)$,
    $h_a^+$ is
    defined\footnote{This choice is slightly arbitrary
    and is made for convenience, as any choice
    satisfying \ref{C1} -- \ref{C3} would be valid.}
    by a linear interpolation of $h_z^+$,
    $z\in\mc V(T)\cap\mc N(T)$. Analogously
    for $h_a^-$, $a\in\mc N(\T)\setminus\mc V(\T)$.
\end{definition}

\begin{remark}[Correspondence to Other Criteria]\label{rem:corr}
    The framework introduced in \Cref{sec:framework,sec:comp}
    covers the $H^1$-stability criteria used in
    \cite{BPS, CT, CC, Siebert, BY}.
    In particular, in \cite{CC} it was shown how the CT and BPS
    criteria correspond to a specific choice of weights $h_z$.
    The iterative approach introduced in \cite{BY} 
    corresponds to $\mu=2$ and, once again, a specific choice of $h_z$.
    Finally, in \cite{Siebert} the authors consider $h_z$ as stated in \Cref{def:siebert}.
\end{remark}

To ensure that this choice of $H_+$, $H_-$ satisfies \ref{C1} -- \ref{C3},
we first check the conditions of \Cref{lemma:choiceh}, i.e.,
if $h_z\sim h_T$ for any $z\in\mc V(T)$ and any $T\in\T$.
Note that one inequality is easily satisfied: for a
sequence of admissible meshes $\{\T_n\}_{n=0}^\infty$,
$z\in\mc V(T^*)$ and a macro element $K_{T^*}\supset T^*$,
$K_{T^*}\in\T_0$, we have
\begin{align*}
    h_z&=\min\set{2^{(\mu\dist(z,T)-\gen(T))/d}:\;T\in\T_n}
    \leq 2^{(\mu\dist(z,T^*)-\gen(T^*))/d}
    =2^{-\gen(T^*)/d}\\
    &=\left(|T^*|/|K_{T^*}|\right)^{1/d}\lesssim
    h_{T^*}\left(1/|K_{T^*}|\right)^{1/d},
\end{align*}
where we used the definition of $\gen(\cdot)$ and the
shape-regularity of $\T_n$.
Thus, $h_z\lesssim h_T$ is easily satisfied with a constant depending
only on $\T_0$.

On the other hand, we have the critical inequality $h_T\lesssim h_z$.
In principle this can always be satisfied by choosing $\mu>0$
large enough. However, a larger $\mu>0$ implies we allow
for a larger rate of change for sizes of neighboring elements.
We namely have

\begin{lemma}[\cite{CC,GaspozNVB,Siebert}]
    For any $T\in\T$, it holds
    \begin{align}\label{eq:hzratio}
        \max_{z,z'\in\mc V(T)}\frac{h_z}{h_{z'}}\leq 2^{\mu/d}.
    \end{align}
\end{lemma}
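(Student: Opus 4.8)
The plan is to prove the bound \eqref{eq:hzratio} directly from the definition of $h_z$ in \Cref{def:siebert}, exploiting the fact that the two nodes $z,z'$ share a common element $T$, so their distance functions to any third element $\bar T$ differ by at most a controlled amount. First I would fix $T\in\T$ and two vertices $z,z'\in\mc V(T)$. By the definition of the minimum, pick an element $\bar T\in\T$ realizing the minimum for $h_{z'}$, i.e., $h_{z'}=2^{(\mu\dist(z',\bar T)-\gen(\bar T))/d}$. Since $h_z$ is itself a minimum over \emph{all} elements, I can bound $h_z\leq 2^{(\mu\dist(z,\bar T)-\gen(\bar T))/d}$ by evaluating at this same $\bar T$. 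Taking the ratio, the generation terms cancel exactly and I am left with
\begin{align*}
    \frac{h_z}{h_{z'}}\leq 2^{\mu(\dist(z,\bar T)-\dist(z',\bar T))/d}.
\end{align*}

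The crux is therefore to show the distance estimate $\dist(z,\bar T)-\dist(z',\bar T)\leq 1$ whenever $z,z'\in\mc V(T)$ for a common element $T$. This is where the triangle-inequality structure of the distance function comes in. The key observation is that since $z,z'\in\mc V(T)\subset T$, both nodes lie in the single element $T$, which means $\dist(z,z')\leq 1$: the chain consisting of the single element $T_1=T$ with $z\in T$ and $z'\in T$ witnesses a distance of at most one. I would then argue that $\dist$ satisfies a triangle inequality with respect to elements, namely $\dist(z,\bar T)\leq \dist(z,z')+\dist(z',\bar T)$, by concatenating a connecting chain from $z$ to $z'$ with an optimal chain from $z'$ to $\bar T$ (overlapping chains of elements concatenate since the last element of the first chain and the first element of the second both contain $z'$, hence intersect). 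Combining these gives $\dist(z,\bar T)\leq \dist(z',\bar T)+1$, and by symmetry the same holds with $z,z'$ swapped.

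Substituting $\dist(z,\bar T)-\dist(z',\bar T)\leq 1$ into the ratio bound yields $h_z/h_{z'}\leq 2^{\mu/d}$, and since $z,z'$ were arbitrary vertices of $T$, taking the maximum over $z,z'\in\mc V(T)$ completes the proof. The main obstacle I anticipate is the careful bookkeeping in the distance triangle inequality: the definition of $\dist(a,b)$ counts the number of elements in a connecting chain rather than edges, so I need to be precise about whether concatenating a chain of length $\dist(z,z')$ with one of length $\dist(z',\bar T)$ produces a valid chain of length at most the sum, given the conventions that $\dist(z,z')\leq 1$ here and that the definition requires consecutive elements to intersect. Verifying that the single-element chain $T_1=T$ genuinely certifies $\dist(z,z')\leq 1$ under \Cref{def:siebert}'s convention — and that this splices correctly onto the optimal chain for $z'$ — is the one place where the argument is not purely mechanical, though it remains elementary.
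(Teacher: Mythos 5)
Your argument is correct: evaluating the minimum defining $h_z$ at the element $\bar T$ that realizes $h_{z'}$, cancelling the generation terms, and bounding $\dist(z,\bar T)\leq\dist(z',\bar T)+1$ by splicing the single-element chain $\{T\}$ (which certifies $\dist(z,z')\leq 1$ since $z,z'\in T$) onto an optimal chain for $z'$ is exactly the standard argument, and the concatenation bookkeeping you flag does go through under the paper's chain-counting convention. Note that the paper itself offers no proof of this lemma --- it is simply cited from \cite{CC,GaspozNVB,Siebert} --- so there is nothing in the text to diverge from; your write-up supplies the expected argument.
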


The parameter $\mu>0$ is determined by the refinement
strategy and reflects the fact that neighboring
elements $T,\bar T\in\T$ can vary in size upto
\begin{align}\label{eq:mu}
    \max\set{|T|/|\bar T|,|\bar T|/|T|}\lesssim 2^\mu.
\end{align}
We are thus interested in the smallest $\mu$,
for a given refinement strategy, satisfying
\eqref{eq:mu}, or, equivalently, satisfying $h_T\lesssim h_z$.

\begin{remark}[Necessary Conditions]\label{rem:necessary}
    The local conditions \ref{C1} -- \ref{C3} from \eqref{eq:evalue}
    are sufficient for $H^1$-stability but
    are by no means necessary. The proof implicitly assumes
    that $h_T/h_{\bar T}$ increases exponentially with
    the distance between elements $T$ and $\bar T$.
    Condition \ref{C3} then ensures this exponential
    growth is counterbalanced by the exponential decay
    of $\norm{\proj_h u_T}[0,\bar T]$, see also \cref{eq:growth},
    the discussion thereafter
    and \Cref{thm:dg}.
    However, assuming consistent worst-case exponential growth for
    $h_T/h_{\bar T}$ is overly pessimistic and will not
    occur for many reasonable adaptively refined meshes
    $\T$, see also \cite[Section 6]{BY}.
\end{remark}
\section{$H^1$-Stability of Adaptively Refined Quadrilateral Meshes}
\label{sec:main}

As discussed in \Cref{sec:comp}, to show $H^1$-stability of a general mesh using
weights as defined in \Cref{def:siebert}, we have to determine the smallest
$\mu>0$ for a given refinement strategy such that $h_T\lesssim h_z$,
$z\in\mc V(T)$ and check whether the smallest eigenvalue in
\cref{eq:evalue} is positive. In this section, we proceed in two
steps.

First, we vary $\mu=1,2,3,4$ and calculate for each $\mu$ and
several reference elements the smallest eigenvalue
in \cref{eq:evalue}.
For a polynomial degree $p\in\N$,
the corresponding conforming
continuous Lagrange FE space $\V_h$
is defined as the space of piece-wise polynomial
functions such that (see also \eqref{eq:setspoly} and \Cref{def:FE})
\begin{itemize}
    \item $(v_h)_{|T}\in\P_p(T)$ if $T$ is a triangle,
    \item $(v_h)_{|T}\in\Q_p(T)$ if $T$ is a quadrilateral,
    \item $v_h\in H^1(\Omega)$ and $v_h\not\in H^2(\Omega)$.
\end{itemize}
We list these results in \Cref{app:tables}.

Second, we use the results from \Cref{app:tables} together with a proof for the minimal
$\mu>0$ to show $H^1$-stability for the refinement strategy
{\QRG} from \cite{BankRG,Funken2020} (see also \Cref{sec:review}).
We will discuss the case of general quadrilaterals and
non-linear transformations $B_T:\hat T\rightarrow T$
in \Cref{sec:nonlinear}, and conclude by showing
$H^1$-stability for {\QRB}.

\begin{remark}[Scope of Results]
    The results in \Cref{app:tables} are both used in this
    work to prove $H^1$-stability
    for particular refinement strategies, and they are
    intended as a reference to check $H^1$-stability
    for other refinement strategies not considered here.
    The corresponding code can be found in \cite{codes}.
\end{remark}

\begin{remark}[Calculating Eigenvalues in \Cref{eq:evalue}]\label{rem:calc}
    To calculate eigenvalues in \cref{eq:evalue}, we make use
    of a handy observation from \cite{Siebert}. For $h_z$ as defined in \Cref{def:siebert},
    for any $z\in \mc V(T^*)$, we have
    \begin{align}\label{eq:hzmin}
        h_z^{-2}=\max\set{2^{\gen(T)-\mu\dist(z,T)}:\;T\in\T}\geq\gen(T^*)\geq 0.
    \end{align}        
    Moreover, the term $\gen(T)-\mu\dist(z, T)$ --
    due to the definition of $\dist(z, T)$ and $\gen(T)$ --
    can only attain values in a discrete set. E.g.,
    for the refinement strategy {\QRG} from \cite{BankRG,Funken2020} (see also \Cref{sec:review}),
    $\gen(T)\in\set{0, 1, \log_2(8/3),2,3,2+\log_2(8/3),4,5,4+\log_2(8/3),\ldots}$.
    Thus, normalizing by the largest (or smallest) $h_z$ in an element
    $T\in\T$, $z\in\mc V(T)$, together with
    \cref{eq:hzmin} and \cref{eq:hzratio},
    for any $T\in\T$ there is only a finite number of
    possible weight configurations. Consequently, in
    \Cref{app:tables} we computed the smallest eigenvalue for each
    combination and took the minimum over all combinations.
\end{remark}

\subsection{Adaptive Refinement for Quadrilateral Meshes}\label{sec:review}

In this subsection, we introduce some refinement strategies for
quadrilateral meshes from \cite{BankRG,Funken2020,verfuerth,Kobbelt}. We consider three different strategies: red refinement {\QR}, red-green refinement {\QRG} and red-blue refinement {\QRB} on quadrilaterals. Red, green and blue patterns used within these refinement strategies are depicted in Figure~\ref{fig:refinementstrategy}. Although these refinement
strategies can be in principle applied to initial meshes of general quadrilaterals, we only consider initial meshes consisting of parallelograms in this work.

 \begin{figure}[h!]
 \begin{minipage}{0.15\textwidth}
\begin{tikzpicture}
\coordinate (1) at (-1,-1);
\coordinate (2) at (1,-1);
\coordinate (3) at (1,1);
\coordinate (4) at (-1,1);
\draw[fill=pastelred] (1)--(2)--(3)--(4)--(1);
\fill (1) circle (2pt);
\fill (2) circle (2pt);
\fill (3) circle (2pt);
\fill (4) circle (2pt);
\node at (0,-1.5) {none};
\end{tikzpicture}
\end{minipage}
\begin{minipage}{0.15\textwidth}
\begin{tikzpicture}
\coordinate (1) at (-1,-1);
\coordinate (2) at (1,-1);
\coordinate (3) at (1,1);
\coordinate (4) at (-1,1);
\coordinate (5) at (0,-1);
\coordinate (6) at (1,0);
\coordinate (7) at (0,1);
\coordinate (8) at (-1,0);
\draw[fill=pastelred] (1)--(2)--(3)--(4)--(1);
\draw (5)--(7);
\draw (6)--(8);
\fill (1) circle (2pt);
\fill (2) circle (2pt);
\fill (3) circle (2pt);
\fill (4) circle (2pt);
\fill (5) circle (2pt);
\fill (6) circle (2pt);
\fill (7) circle (2pt);
\fill (8) circle (2pt);
\fill ($(5)!0.5!(7)$) circle (2pt);
\node at (0,-1.5) {red};
\end{tikzpicture}
\end{minipage}
\begin{minipage}{0.15\textwidth}
\begin{tikzpicture}
\coordinate (1) at (-1,-1);
\coordinate (2) at (1,-1);
\coordinate (3) at (1,1);
\coordinate (4) at (-1,1);
\draw[fill=lightgreen] (1)--(2)--(3)--(4)--(1);
\draw (4)--($(1)!0.5!(2)$)--(3);
\fill (1) circle (2pt);
\fill (2) circle (2pt);
\fill (3) circle (2pt);
\fill (4) circle (2pt);
\fill ($(1)!0.5!(2)$) circle (2pt);
\node at (0,-1.5) {green 1};
\end{tikzpicture}
\end{minipage}
\begin{minipage}{0.15\textwidth}
\begin{tikzpicture}
\coordinate (1) at (-1,-1);
\coordinate (2) at (1,-1);
\coordinate (3) at (1,1);
\coordinate (4) at (-1,1);
\draw[fill=lightgreen] (1)--(2)--(3)--(4)--(1);
\draw (4)--($(1)!0.5!(2)$)--($(2)!0.5!(3)$)--(4);
\fill (1) circle (2pt);
\fill (2) circle (2pt);
\fill (3) circle (2pt);
\fill (4) circle (2pt);
\fill ($(1)!0.5!(2)$) circle (2pt);
\fill ($(2)!0.5!(3)$) circle (2pt);
\node at (0,-1.5) {green 2};
\end{tikzpicture}
\end{minipage}
\begin{minipage}{0.15\textwidth}
\begin{tikzpicture}
\coordinate (1) at (-1,-1);
\coordinate (2) at (1,-1);
\coordinate (3) at (1,1);
\coordinate (4) at (-1,1);
\draw[fill=lightgreen] (1)--(2)--(3)--(4)--(1);
\draw ($(1)!0.5!(2)$)--($(3)!0.5!(4)$);
\fill (1) circle (2pt);
\fill (2) circle (2pt);
\fill (3) circle (2pt);
\fill (4) circle (2pt);
\fill ($(1)!0.5!(2)$) circle (2pt);
\fill ($(3)!0.5!(4)$) circle (2pt);
\node at (0,-1.5) {green 3};
\end{tikzpicture}
\end{minipage}
\begin{minipage}{0.15\textwidth}
\begin{tikzpicture}
\coordinate (1) at (-1,-1);
\coordinate (2) at (1,-1);
\coordinate (3) at (1,1);
\coordinate (4) at (-1,1);
\coordinate (5) at (0,-1);
\coordinate (6) at (1,0);
\coordinate (7) at (0,1);
\coordinate (8) at (-1,0);
\coordinate (9) at (0,0);
\draw[fill=lightblue] (1)--(2)--(3)--(4)--(1);
\draw (5)--(9);
\draw (6)--(9);
\draw (9)--(4);
\fill (1) circle (2pt);
\fill (2) circle (2pt);
\fill (3) circle (2pt);
\fill (4) circle (2pt);
\fill (5) circle (2pt);
\fill (6) circle (2pt);
\fill (9) circle (2pt);
\node at (0,-1.5) {blue};
\end{tikzpicture}
\end{minipage}
 \caption{From left to right: Unrefined element, red pattern, three different green patterns and a blue pattern.}
 \label{fig:refinementstrategy}
 \end{figure}
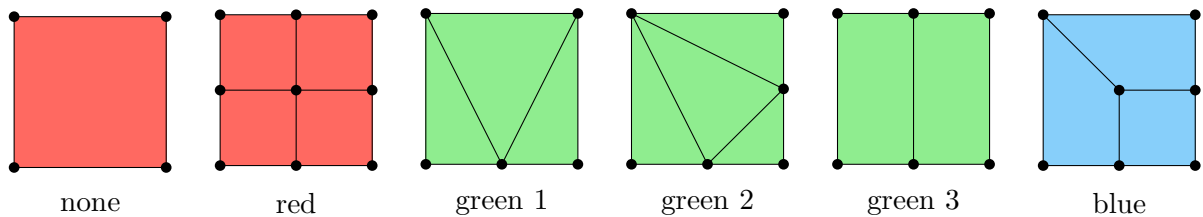

We call a quadrilateral \emph{red}-refined if it is subdivided into four quadrilaterals by joining the midpoints of opposite edges with each other. We call a quadrilateral \emph{green}-refined if it is divided into
\begin{enumerate}
\item three subtriangles by connecting the midpoint of one edge with the vertices opposite to this edge (green 1);
\item four subtriangles by joining the midpoints of two adjacent edges with the vertex shared by the other two edges with each other (green 2);
\item two subquadrilaterals by joining the midpoints of opposite edges with each other (green 3).
\end{enumerate}
We call a quadrilateral \emph{blue}-refined if it is divided into three subquadrilaterals by connecting the quadrilateral's midpoint with the two midpoints of adjacent edges and with the vertex shared by the other two edges with each other.

With this we can describe the refinement strategies {\QR} in Algorithm~\ref{alg:QR}, {\QRG} in Algorithm~\ref{alg:QRG} and {\QRB} in Algorithm~\ref{alg:QRB}.
Note, that for {\QR} 1-irregularity of the mesh is ensured. We call a mesh $\mathcal{T}$ \emph{1-irregular} if the number of hanging nodes per edge is restricted to one. A node $z \in \mc V(\T)$ is called a \emph{hanging node} if for some element $T \in \mathcal{T}$ holds $z \in \partial T\setminus\mc V(\T)$.

\begin{algorithm}[h!]
\caption{{\QR}}
\label{alg:QR}
\begin{algorithmic}[1]
\State{\bf Input:} Mesh $\mathcal{T}$ and set of marked elements $\mathcal{T}_M$.
\State {\bf Output:} Refined mesh $\hat{\mathcal{T}}$.
\Repeat
\State red-refine all $T \in \mathcal{T}_M$
\State add elements with more than one hanging node per edge to $\mathcal{T}_M$ \Comment{1-Irregularity}
\State add elements with more than three refined neighbors to $\mathcal{T}_M$ \Comment{3-Neighbor Rule}
\Until{$\mathcal{T}_M = \emptyset$}
\end{algorithmic}
\end{algorithm}

\begin{remark}[Reference Elements for 1-Irregular Meshes]
For 1-irregular meshes, three different situations have to be considered: 0, 1 or 2 hanging nodes per element, see Figure~\ref{fig:hangingnode}. Other situations can not arise because a red refinement does not allow for any other combinations of hanging nodes.
\begin{figure}[h!]
 \begin{minipage}{0.25\textwidth}
\begin{tikzpicture}
\coordinate (1) at (-1,-1);
\coordinate (2) at (1,-1);
\coordinate (3) at (1,1);
\coordinate (4) at (-1,1);
\coordinate (5) at (-3,1);
\coordinate (6) at (-3,-1);
\coordinate (7) at (-1,-3);
\coordinate (8) at (1,-3);
\draw (1)--(2)--(3)--(4)--(1);
\draw (4)--(5)--(6)--(1);
\draw (1)--(7)--(8)--(2);
\draw ($(5)!0.5!(6)$) -- ($(2)!0.5!(3)$);
\draw ($(4)!0.5!(3)$) -- ($(7)!0.5!(8)$);
\draw ($(5)!0.5!(4)$) -- ($(1)!0.5!(6)$);
\draw ($(1)!0.5!(7)$) -- ($(2)!0.5!(8)$);
\fill (1) circle (2pt);
\fill ($(1)!0.5!(2)$) circle (2pt);
\fill ($(1)!0.5!(3)$) circle (2pt);
\fill ($(1)!0.5!(4)$) circle (2pt);
\draw[pattern=crosshatch dots] (1)--($(1)!0.5!(2)$)--($(1)!0.5!(3)$)--($(1)!0.5!(4)$);
\end{tikzpicture}
\end{minipage} \hspace*{3ex}
 \begin{minipage}{0.25\textwidth}
\begin{tikzpicture}
\coordinate (1) at (-1,-1);
\coordinate (2) at (1,-1);
\coordinate (3) at (1,1);
\coordinate (4) at (-1,1);
\coordinate (5) at (-3,1);
\coordinate (6) at (-3,-1);
\coordinate (7) at (-1,-3);
\coordinate (8) at (1,-3);
\draw (1)--(2)--(3)--(4)--(1);
\draw (4)--(5)--(6)--(1);
\draw (1)--(7)--(8)--(2);
\draw ($(1)!0.5!(4)$) -- ($(2)!0.5!(3)$);
\draw ($(4)!0.5!(3)$) -- ($(7)!0.5!(8)$);
\draw ($(1)!0.5!(7)$) -- ($(2)!0.5!(8)$);
\fill (1) circle (2pt);
\fill ($(1)!0.5!(2)$) circle (2pt);
\fill ($(1)!0.5!(3)$) circle (2pt);
\draw[pattern=crosshatch dots] (1)--($(1)!0.5!(2)$)--($(1)!0.5!(3)$)--($(1)!0.5!(4)$);
\node[circle,draw=black, fill=white,inner sep=0pt,minimum size=4pt] at ($(1)!0.5!(4)$) {};
\end{tikzpicture}
\end{minipage} \hspace*{3ex}
 \begin{minipage}{0.25\textwidth}
\begin{tikzpicture}
\coordinate (1) at (-1,-1);
\coordinate (2) at (1,-1);
\coordinate (3) at (1,1);
\coordinate (4) at (-1,1);
\coordinate (5) at (-3,1);
\coordinate (6) at (-3,-1);
\coordinate (7) at (-1,-3);
\coordinate (8) at (1,-3);
\draw (1)--(2)--(3)--(4)--(1);
\draw (4)--(5)--(6)--(1);
\draw (1)--(7)--(8)--(2);
\draw[pattern=crosshatch dots] (1)--($(1)!0.5!(2)$)--($(1)!0.5!(3)$)--($(1)!0.5!(4)$);
\draw ($(1)!0.5!(4)$) -- ($(2)!0.5!(3)$);
\draw ($(4)!0.5!(3)$) -- ($(1)!0.5!(2)$);
\fill (1) circle (2pt);
\fill ($(1)!0.5!(3)$) circle (2pt);
\node[circle,draw=black, fill=white,inner sep=0pt,minimum size=4pt] at ($(1)!0.5!(4)$) {};
\node[circle,draw=black, fill=white,inner sep=0pt,minimum size=4pt] at ($(1)!0.5!(2)$) {};
\end{tikzpicture}
\end{minipage}
\caption{From left to right: 0, 1 and 2 hanging nodes (in white) per element (dotted).}
\label{fig:hangingnode}
\end{figure}
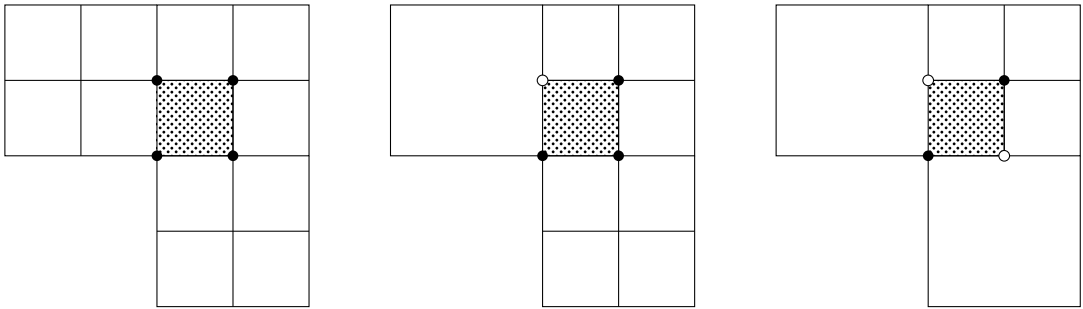
\end{remark}

\begin{algorithm}[h!]
\caption{{\QRG}}
\label{alg:QRG}
\begin{algorithmic}[1]
\State{\bf Input:} Mesh $\mathcal{T}$ and set of marked elements $\mathcal{T}_M$.
\State {\bf Output:} Refined mesh $\hat{\mathcal{T}}$.
\State Undo all green refinements and add their parent element to $\mathcal{T}_M$.
\State Call {\QR} with updated mesh and updated marked elements $\mathcal{T}_M$.
\State Eliminate hanging nodes by matching green patterns.
\end{algorithmic}
\end{algorithm}

\begin{algorithm}[h!]
\caption{{\QRB}}
\label{alg:QRB}
\begin{algorithmic}[1]
\State{\bf Input:} Mesh $\mathcal{T}$ and set of marked elements $\mathcal{T}_M$.
\State {\bf Output:} Refined mesh $\hat{\mathcal{T}}$.
\State Undo all blue refinements and add their parent element to $\mathcal{T}_M$.
\State Call {\QR} with updated mesh and updated marked elements $\mathcal{T}_M$.
\State Eliminate hanging nodes by matching blue patterns and an additional \textsc{Closure} step, see \cite{Funken2020}.
\end{algorithmic}
\end{algorithm}

\begin{remark}[Generation of an Element]
Assume the initial mesh $\T_0$ consists of parallelograms.
A red refinement quarters a parallelogram, i.\,e.,
the area ratios and thus the possible generations are given by $$\gen(T) \in \left\{0,2,4,6,\ldots\right\},$$ for all $T \in \mathcal{T}$.

For {\QRG} we have
$$\gen(T) \in \left\{0, 1, \log_2(8/3),2,3,2+\log_2(8/3),4,5,4+\log_2(8/3),\ldots\right\},$$ for all $T \in \mathcal{T}$,
cf. \Cref{fig:ratios}. As green refinements are undone before they are further refined, the sequence continues in the same scheme.

As shown in \Cref{fig:refinementstrategy}, not all elements of the blue patterns can be obtained by an affine transformation $B_T:\hat T\rightarrow T$ from $\hat T=[0,1]^2$, see \Cref{sec:nonlinear}.
We have for {\QRB} that $$\gen(T) \in \left\{0,\log_2(8/3),2,2+\log_2(8/3),4,\ldots\right\},$$ for all $T \in \mathcal{T}$.
\end{remark}

\begin{figure}[h!]
 \begin{minipage}{0.15\textwidth}
\begin{tikzpicture}
\coordinate (1) at (-1,-1);
\coordinate (2) at (1,-1);
\coordinate (3) at (1,1);
\coordinate (4) at (-1,1);
\draw(1)--(2)--(3)--(4)--(1);
\fill (1) circle (2pt);
\fill (2) circle (2pt);
\fill (3) circle (2pt);
\fill (4) circle (2pt);
\node at (0,0) {$T_0$};
\node at (0,-1.5) {none};
\end{tikzpicture}
\end{minipage}
\begin{minipage}{0.15\textwidth}
\begin{tikzpicture}
\coordinate (1) at (-1,-1);
\coordinate (2) at (1,-1);
\coordinate (3) at (1,1);
\coordinate (4) at (-1,1);
\coordinate (5) at (0,-1);
\coordinate (6) at (1,0);
\coordinate (7) at (0,1);
\coordinate (8) at (-1,0);
\draw(1)--(2)--(3)--(4)--(1);
\draw (5)--(7);
\draw (6)--(8);
\fill (1) circle (2pt);
\fill (2) circle (2pt);
\fill (3) circle (2pt);
\fill (4) circle (2pt);
\fill (5) circle (2pt);
\fill (6) circle (2pt);
\fill (7) circle (2pt);
\fill (8) circle (2pt);
\fill ($(5)!0.5!(7)$) circle (2pt);
\node at (-0.5,-0.5) {$T_3$};
\node at (0.5,-0.5) {$T_3$};
\node at (0.5,0.5) {$T_3$};
\node at (-0.5,0.5) {$T_3$};
\node at (0,-1.5) {red};
\end{tikzpicture}
\end{minipage}
\begin{minipage}{0.15\textwidth}
\begin{tikzpicture}
\coordinate (1) at (-1,-1);
\coordinate (2) at (1,-1);
\coordinate (3) at (1,1);
\coordinate (4) at (-1,1);
\draw (1)--(2)--(3)--(4)--(1);
\draw (4)--($(1)!0.5!(2)$)--(3);
\fill (1) circle (2pt);
\fill (2) circle (2pt);
\fill (3) circle (2pt);
\fill (4) circle (2pt);
\fill ($(1)!0.5!(2)$) circle (2pt);
\node at (0,0.3) {$T_1$};
\node at (-0.6,-0.5) {$T_3$};
\node at (0.6,-0.5) {$T_3$};
\node at (0,-1.5) {green 1};
\end{tikzpicture}
\end{minipage}
\begin{minipage}{0.15\textwidth}
\begin{tikzpicture}
\coordinate (1) at (-1,-1);
\coordinate (2) at (1,-1);
\coordinate (3) at (1,1);
\coordinate (4) at (-1,1);
\draw (1)--(2)--(3)--(4)--(1);
\draw (4)--($(1)!0.5!(2)$)--($(2)!0.5!(3)$)--(4);
\fill (1) circle (2pt);
\fill (2) circle (2pt);
\fill (3) circle (2pt);
\fill (4) circle (2pt);
\fill ($(1)!0.5!(2)$) circle (2pt);
\fill ($(2)!0.5!(3)$) circle (2pt);
\node at (-0.6,-0.5) {$T_3$};
\node at (0.6,0.6) {$T_3$};
\node at (0,0) {$T_2$};
\node at (0.7,-0.7) {$T_4$};
\node at (0,-1.5) {green 2};
\end{tikzpicture}
\end{minipage}
\begin{minipage}{0.15\textwidth}
\begin{tikzpicture}
\coordinate (1) at (-1,-1);
\coordinate (2) at (1,-1);
\coordinate (3) at (1,1);
\coordinate (4) at (-1,1);
\draw(1)--(2)--(3)--(4)--(1);
\draw ($(1)!0.5!(2)$)--($(3)!0.5!(4)$);
\fill (1) circle (2pt);
\fill (2) circle (2pt);
\fill (3) circle (2pt);
\fill (4) circle (2pt);
\fill ($(1)!0.5!(2)$) circle (2pt);
\fill ($(3)!0.5!(4)$) circle (2pt);
\node at (-0.5,0) {$T_1$};
\node at (0.5,0) {$T_1$};
\node at (0,-1.5) {green 3};
\end{tikzpicture}
\end{minipage}
\caption{Area ratios for the refinement patterns in {\QRG}: $|T_0|/|T_0|=1,|T_0|/|T_1|=2,|T_0|/|T_2|=8/3,|T_0|/|T_3|=4$ and $|T_0|/|T_4|=8$.}
\label{fig:ratios}
\end{figure}
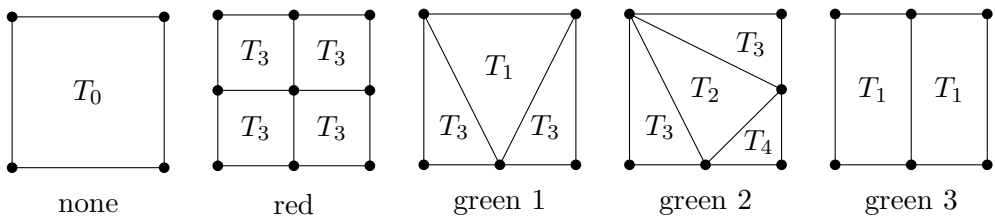

\subsection{Proof of $H^1$-Stability}

As was explained in \Cref{sec:prwork} and \Cref{rem:necessary}, $H^1$-stability relies
upon two competing effects: the localization properties of the
$L^2$-projection $\proj_h$, which in turn depend on the polynomial
degree of the FE space and the geometric shape of the elements;
and the rate at which the mesh size is allowed to
vary, which is reflected by the parameter $\mu$.

It was already observed in
\cite{BY, Siebert} for triangular meshes that the localization
properties of $\proj_h$ improve at first
for increasing polynomial degrees (up to $p=5$ or $p=6$),
after which they start deteriorating again.
This is also observed for quadrilateral meshes in \Cref{app:tables}.

For increasing $\mu$, the mesh size is allowed to vary more and this
naturally leads to deteriorating constants as well.

Finally,
in the presence of hanging nodes and due to the
continuity constraint $\V_h\subset C(\Omega)$,
the localization properties of $\proj_h$ deteriorate.
A piece-wise polynomial continuous function with a non-zero
value at one of the free nodes on the hanging edge (see Figure~\ref{fig:hangingnode})
has slightly larger support as it cannot be set to zero
at the hanging node without violating the continuity constraint.
This is reflected in Table~\ref{tab:quadrilateral1} and Table~\ref{tab:quadrilateral2} through deteriorating eigenvalues for $1$ and $2$ hanging nodes per element.

To show $H^1$-stability we require the following lemma.
\begin{lemma}[Minimal $\mu$]\label{lemma}
    For the refinement strategies {\QR}, {\QRG} and {\QRB},
    an initial mesh $\T_0$
    consisting of parallelograms
    and
    the choice $\mu=2$ for the weights from \Cref{def:siebert},
    we have $h_T\lesssim h_z$ for any $z\in\mc V(T)$,
    where the constant depends only on $\T_0$.
\end{lemma}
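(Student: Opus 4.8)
The plan is to establish the single remaining (and critical) inequality $h_T\lesssim h_z$; the reverse bound $h_z\lesssim h_T$ was already shown at the end of \Cref{sec:comp} with a constant depending only on $\T_0$, so it suffices to treat this one direction. First I would pass to the dual formulation of the weight through \eqref{eq:hzmin}. Specializing to $d=2$ and $\mu=2$, shape-regularity together with $|T|=|K_T|2^{-\gen(T)}$ gives $h_T^{-2}\sim|K_T|^{-1}2^{\gen(T)}$, while $h_z^{-2}=\max\set{2^{\gen(T')-2\dist(z,T')}:\;T'\in\T}$. Hence $h_T\lesssim h_z$ is equivalent to the purely combinatorial statement
\begin{align*}
    \gen(T')-\gen(T)\leq 2\dist(z,T')+C,\quad\forall T'\in\T,
\end{align*}
required for every $T$ with $z\in\mc V(T)$, where $C=C(\T_0)$ absorbs the finitely many, $\T_0$-controlled macro volumes $|K_T|$. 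The whole proof then reduces to controlling how fast the generation may grow as one moves away from $z$.

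Second, I would prove the basic \emph{edge-grading} estimate: two elements sharing a full common edge differ in generation by at most $2$. For {\QR} this is exactly the content of $1$-irregularity, since at most one hanging node per edge rules out two red levels meeting across an edge. For {\QRG} and {\QRB} the same bound persists because green and blue patterns are undone before any further refinement (see \Cref{alg:QRG,alg:QRB}), so the admissible area ratios across an edge remain inside the discrete generation sets recorded in \Cref{fig:ratios} and the accompanying remark. This step is essentially bookkeeping once the structure of the refinement algorithms is used.

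The heart of the argument — and the step I expect to be the main obstacle — is upgrading this edge estimate to the distance estimate in the presence of \emph{vertex-only} adjacencies. A single diagonal (corner) step can make two elements that share only a vertex differ in generation by as much as $4$, so the naive per-hop bound along a chain defining $\dist(z,T')$ is $4$ rather than the required $2$, and a staircase of such corner jumps would a priori force $C\to\infty$. The key point to establish is that steep corner jumps cannot accumulate: the bridging elements surrounding a shared vertex are themselves pinned down by the edge-grading estimate, and the $3$-neighbor rule prevents a coarse element from being engulfed by finer ones, so that creating a generation gap of $2k$ across corners forces at least on the order of $k$ distinct intermediate refinement levels — precisely the elements any shortest chain from $z$ to $T'$ must traverse. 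I would make this quantitative by a finite case analysis of the admissible local patterns around a node, namely the $0$, $1$ and $2$ hanging-node configurations of \Cref{fig:hangingnode} together with the green/blue sub-elements, showing that along a shortest chain the generation rises by at most $2$ per unit of $\dist(z,T')$, up to the additive constant $C$. Combining this with the reduction of the first step and the edge estimate of the second then yields $h_T\lesssim h_z$, with a constant depending only on $\T_0$ through the macro volumes and the bounded number of local patterns.
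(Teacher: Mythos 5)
You reduce the lemma to the right combinatorial statement, namely $\gen(T')-\gen(T)\leq 2\dist(z,z')+C(\T_0)$ for elements containing $z$ and $z'$ respectively, which is exactly the paper's starting point, and you correctly locate the difficulty: elements meeting only at a vertex can differ in generation by more than $2$ (by $4$ for \QR{}, by $3$ for \QRG{}), so the naive per-hop bound along a shortest chain only yields $\mu=4$. The gap is that the decisive step is announced rather than proved. The claim that ``steep corner jumps cannot accumulate'' --- that a generation gap of $2k$ forces on the order of $k$ hops --- \emph{is} the content of the lemma, and your appeal to the $3$-neighbor rule and to an unspecified ``finite case analysis'' does not identify the mechanism. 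The paper's argument is concrete: for \QRG{}, a jump of $3$ can occur at most once along a chain, all other jumps being at most $2$; for \QR{}, between any two jumps of $4$ there must be a jump of $0$, so that with $N_4$ the number of $4$-jumps a telescoping sum gives $\gen(T)-\gen(T')\leq 2\#\CE(z,z')+2N_4-2(N_4-1)=2\#\CE(z,z')+2$ (see \Cref{fig:maxdiff}). Without an argument of this kind you only obtain the lemma for $\mu=4$, which is useless here since the corresponding eigenvalues in \Cref{app:tables} are negative for most $p$.

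Two further points. First, your ``edge-grading'' estimate (generation difference at most $2$ across a shared edge) is not mere bookkeeping for \QRG{} and \QRB{}: green and blue sub-elements have generations in $\{1,\log_2(8/3),2,3,\ldots\}$ and the admissible adjacencies must actually be checked; note also that the paper never isolates an edge-only bound but works directly with the whole patch of elements meeting at a vertex. Second, you do not treat vertices of the initial mesh: there the generation jump across the patch is bounded only in terms of the valence of $\T_0$ (the paper's $\alpha_0$), not by $4$, so the chain has to be split at the finitely many such vertices before one can extract an additive constant of the form $C_\mu=4+\#\mc V(\T_0)(\alpha_0+1)$. This is absorbable into $C(\T_0)$, but it must be addressed explicitly, since otherwise your local case analysis around a node is not finite independently of $\T_0$.
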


\begin{proof}
We closely follow the arguments in \cite{GaspozNVB,CCRGB}. We want to
show that there are constants $\mu>0$ and $C_\mu>0$ only dependent on the initial mesh such that
\begin{align}\label{eq:mu_ineq}
\gen(T')-\gen(T)\leq \mu \dist(z,z')+C_\mu\quad \forall T \in \mathcal{T}(z),T' \in \mathcal{T}(z'),
\end{align}
i.e., the difference in generations of two elements
is bounded by a multiple of the distance
and an additive constant,
where $\mathcal{T}(z):= \left\{T \in \mathcal{T}~|~z \in T\right\}$.
Then, \cref{eq:mu_ineq} would readily imply
$h_T \lesssim h_z$ for any $z \in \mc V(T)$.
This can be seen as follows.
For any $T \in \mathcal{T}$, there is a macro element $K_T \in \mathcal{T}_0$ such that $T \subset K_T$. For $z \in \mc V(T)$ let $T'$ and $z' \in T'$ be such that the weight $h_z$ satisfies
\begin{align*}
h_z^2 = 2^{\mu \dist(z,T')-\gen(T')}= 2^{\mu \dist(z,z')-\gen(T')}.
\end{align*}
Using \cref{eq:mu_ineq} and recalling the definition of $\gen(\cdot)$, we conclude
\begin{align*}
h_z^2 \geq 2^{-C_\mu-\gen(T)}=2^{-C_\mu}\frac{|T|}{|K_T|}\geq C h_T^2
\end{align*}
with $C:= 2^{-C_\mu}\frac{1}{|K_T|}$ only dependent on the initial mesh. This shows $h_T\lesssim h_z$ for any $z\in\mc V(T)$.

It thus remains to show \cref{eq:mu_ineq} with $\mu = 2$ and a suitable choice of $C_\mu $. 
Fix some $z,z'\in\mc V(\T)$ and
consider the shortest path of elements connecting $z$ and $z'$,
i.e., $$\CE(z,z'):=\left\{z=z_0,z_1,\ldots,z_{M-1},z_M = z'\right\},$$
where
$z_{i-1}$ and $z_i$ belong to the same element, $i=1,\ldots,M$,
and $M$ is minimal. We proceed in three steps.

(1) For any $z \in \mathcal{V}(T)$ we bound
\[\gen(T)-\gen(T') \leq \begin{cases} \alpha, & z \in \mathcal{V}(\T)\setminus\mathcal{V}(\T_0) \\ \alpha_0, & z \in \mathcal{V}(\T_0)\end{cases} \quad \text{ for all }T,T' \in \mathcal{T}(z).\]
Let $z \in \mathcal{V}(\T_0)$. An upper bound is given by
$\alpha_0 =\max\left\{\#\mathcal{T}_0(z_0)~|~z_0 \in \mathcal{V}(\T_0)\right\}$ for {\QR}, {\QRG} and {\QRB}.
The generation increases at most by 2
when crossing an edge of a macro element
and
we can traverse from the element with minimal generation to maximal generation by crossing at most $\lfloor\#\T_0(z_0)/2\rfloor$ macro elements.

%

Let $z \in \mathcal{V}(\T)\setminus\mathcal{V}(\T_0)$. The possible maximal differences of generations are shown in Figure~\ref{fig:maxdiff} and yield $\alpha = 2$ for {\QRB}, $\alpha = 3$ for {\QRG} and $\alpha=4$ for {\QR}.

(2) For any $z,z' \in \mathcal{V}(\T)\setminus \mathcal{V}(\T_0)$, we consider the maximal difference of generations for all elements $T \in \mathcal{T}(z)$, $T' \in \mathcal{T}(z')$,
where $\CE(z,z')\cap\mathcal{V}(\T_0) = \emptyset$.
By the above considerations,
a straight-forward upper bound would be
\[\gen(T)-\gen(T') \leq\alpha\#\CE(z,z') \quad\text{ for all } T \in \mathcal{T}(z), T' \in \mathcal{T}(z').\]
\Cref{fig:maxdiff} shows meshes yielding the maximal difference of generation. 
For {\QRB} this bound can not be improved because a sequence of scaled versions can be inserted into the most upper left quadrilateral, showing that this bound is optimal.

For {\QRG} we can improve this bound.
To this end, for $i=1,\ldots,M$, set
$$T_i = \arg \max\left\{\gen(T)~|~T \in \mathcal{T},\;
z_{i-1},z_i\in T\right\}.$$
Without loss of generality, we only consider generation increases. We see that the generation difference $3$ can only be attained once in a path
-- namely $\gen(T_1)=\gen(T)+3$ for some $T\in\T(z)$ --
and otherwise
$\gen(T_i) = \gen(T_{i-1})+2$ for $i=2,\ldots M$. We can thus conclude
the upper bound
\[\gen(T)-\gen(T') \leq 2\#\CE(z,z')+1 \quad\text{ for all } T \in \mathcal{T}(z), T' \in \mathcal{T}(z').\]
Note, that other situations than the one shown in \Cref{fig:maxdiff} can arise that also satisfy this reduced upper bound.

For {\QR}, improving the upper bound is a bit more involved, cf.~\cite[Proposition 3.8]{GaspozNVB}. Two reference situations for {\QR} are shown in \Cref{fig:maxdiff}. These show that in between two generation differences of 4, there must be one generation difference of 0, with any possible number of generation differences of 2 in between. Let $N_4$ denote the number of indices with generation difference 4 within $\CE(z,z')$. Using a telescopic sum this yields
\begin{align*}\gen(T)-\gen(T') &= \sum_{i=0}^M \gen(T_{i+1})-\gen(T_i)\\ &\leq 2\#\CE(z,z')+2N_4-2(N_4-1) =  2\#\CE(z,z')+2 .
\end{align*}
Thus, for all strategies, we have a common upper bound $2\#\CE(z,z')+2$.

(3) We now take a minimal path connecting $z$ and $z'$ and split the path into pieces $\CE(z,z')\setminus\mathcal{V}(\T_0)$ and $z \in \mathcal{V}(\T_0)$, i.\,e.,
\begin{align*}\CE(z,z') =   \CE(z,z_1) \cup \left\{z_1\right\} \cup \CE(z_1,z_2) \cup \cdots \cup \left\{z_J\right\} \cup \CE(z_J,z'), \end{align*}
where $J$ vertices from the initial mesh are contained in $\CE(z,z')$.
We can thus conclude that the bound for all $z,z'  \in \mathcal{V}(\T)$ is \begin{align*}\gen(T)-\gen(T') &\leq \sum_{j=0}^J  (2\#\CE(z_{j},z_{j+1})+2) + \sum_{j=1}^J \alpha_0 \\ &\leq 2\#\CE(z,z') +2(J+1)+J\alpha_0 \end{align*} for all $T \in \mathcal{T}(z),T' \in \mathcal{T}(z')$. Minimality of $\CE(z,z')$ gives $\CE(z,z')=\dist(z,z')+1$ and $J \leq \#\mathcal{V}(\T_0)$. This shows \cref{eq:mu_ineq} with $\mu = 2$ and $C_\mu = 4+\#\mathcal{V}(\T_0)(\alpha_0+1)$.
\end{proof}

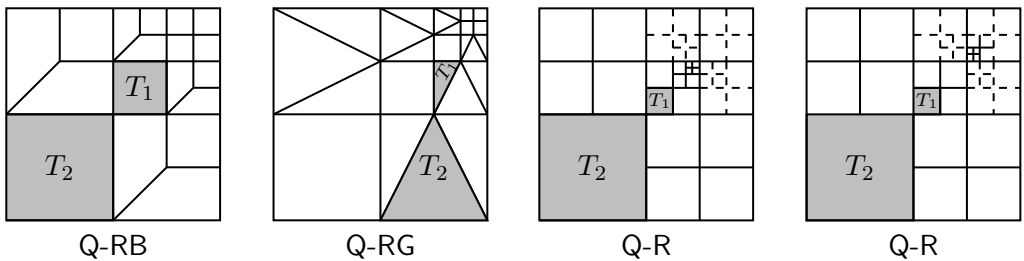
\begin{figure}

\tikzset{every picture/.style={line width=0.75pt}} 

\begin{tikzpicture}[x=0.8pt,y=0.8pt,yscale=-0.5,xscale=0.5]
\draw[fill=lightgray] (0,200)--(100,200)--(100,100)--(0,100);
\draw[fill=lightgray] (100,100)--(150,100)--(150,50)--(100,50);
\node at (50,150) {$T_2$};
\node at (125,75) {$T_1$};
\draw   (250,0) -- (450,0) -- (450,200) -- (250,200) -- cycle ;
\draw    (350,0) -- (350,200) ;
\draw    (450,100) -- (250,100) ;
\draw    (400,0) -- (400,100) ;
\draw    (350,50) -- (450,50) ;
\draw    (425,0) -- (425,50) ;
\draw    (400,25) -- (450,25) ;

\draw    (250,0) -- (350,50) ;
\draw    (250,100) -- (350,50) ;
\draw    (350,0) -- (400,25) ;
\draw    (350,50) -- (400,25) ;
\draw    (350,200) -- (400,100) ;
\draw    (400,100) -- (450,200) ;
\draw    (425,50) -- (450,100) ;
\draw    (400,100) -- (425,50) ;
\draw    (437,0) -- (437,25) ;
\draw    (425,12) -- (450,12) ;
\draw    (400,0) -- (425,12) ;
\draw    (400,25) -- (425,12) ;
\draw    (425,50) -- (437,25) ;
\draw    (437,25) -- (450,50) ;

\node at (100,225) {\QRB};

\draw[fill=lightgray] (350,200)--(400,100)--(450,200)--cycle;
\draw[fill=lightgray] (400,100)--(400,50)--(425,50) -- cycle;
\node at (400,150) {$T_2$};
\node[rotate=60] at (412,58) {\tiny $T_1$};
\draw   (0,0) -- (200,0) -- (200,200) -- (0,200) -- cycle ;
\draw    (100,0) -- (100,200) ;
\draw    (200,100) -- (0,100) ;
\draw    (150,0) -- (150,100) ;
\draw    (100,50) -- (200,50) ;
\draw    (175,0) -- (175,50) ;
\draw    (150,25) -- (200,25) ;

\draw    (50,0) -- (50,50) ;
\draw    (50,50) -- (100,50) ;
\draw    (0,100) -- (50,50) ;
\draw    (100,50) -- (125,25) ;
\draw    (125,0) -- (125,25) ;
\draw    (125,25) -- (150,25) ;
\draw    (175,50) -- (175,75) ;
\draw    (175,75) -- (200,75) ;
\draw    (175,75) -- (150,100) ;
\draw    (150,100) -- (150,150) ;
\draw    (150,150) -- (200,150) ;
\draw    (100,200) -- (150,150) ;
\node at (350,225) {\QRG};
\end{tikzpicture}\hspace*{15pt}
\begin{tikzpicture}[x=0.8pt,y=0.8pt,yscale=-0.5,xscale=0.5]
\draw[fill=lightgray] (0,450)--(100,450)--(100,350)--(0,350);
\draw[fill=lightgray] (100,350)--(125,350)--(125,325)--(100,325);
\node at (50,400) {$T_2$};
\node at (113,337) {\tiny $T_1$};
\draw   (0,250) -- (200,250) -- (200,450) -- (0,450) -- cycle ;
\draw    (100,250) -- (100,450) ;
\draw    (200,350) -- (0,350) ;
\draw    (150,250) -- (150,350) ;
\draw    (100,300) -- (200,300) ;
\draw[dashed]    (175,250) -- (175,300) ;
\draw[dashed]    (150,275) -- (200,275) ;
\draw[dashed]    (125,287) -- (150,287) ;
\draw[dashed]    (137,275) -- (137,300) ;
\draw[dashed]    (150,312) -- (175,312) ;
\draw[dashed]    (162,300) -- (162,325) ;
\draw    (0,300) -- (100,300) ;
\draw    (50,250) -- (50,350) ;
\draw    (150,350) -- (150,450) ;
\draw    (100,400) -- (200,400) ;
\draw[dashed]  (125,250) -- (125,300) ;
\draw    (125,300) -- (125,350) ;
\draw    (100,325) -- (150,325) ;
\draw[dashed]    (150,325) -- (200,325) ;
\draw[dashed]    (100,275) -- (150,275) ;
\draw[dashed]    (175,300) -- (175,350) ;

\draw    (137,300) -- (137,325) ;
\draw    (125,312) -- (150,312) ;
\draw    (137,306) -- (150,306) ;
\draw    (143,300) -- (143,312) ;

\node at (100,475) {\QR};

\draw[fill=lightgray] (250,350)--(250,450)--(350,450)--(350,350);
\draw[fill=lightgray] (350,350)--(375,350)--(375,325)--(350,325);
\node at (300,400) {$T_2$};
\node at (363,337) {\tiny $T_1$};
\draw   (250,250) -- (450,250) -- (450,450) -- (250,450) -- cycle ;
\draw    (350,250) -- (350,450) ;
\draw    (450,350) -- (250,350) ;
\draw    (400,250) -- (400,350) ;
\draw    (350,300) -- (450,300) ;
\draw[dashed]    (425,250) -- (425,300) ;
\draw[dashed]    (400,275) -- (450,275) ;

\draw    (250,300) -- (350,300) ;
\draw    (300,250) -- (300,350) ;
\draw    (400,350) -- (400,450) ;
\draw    (350,400) -- (450,400) ;
\draw[dashed]    (375,250) -- (375,300) ;
\draw   (375,300) -- (375,350) ;
\draw   (350,325) -- (400,325) ;
\draw[dashed]    (400,325) -- (450,325) ;
\draw[dashed]    (350,275) -- (400,275) ;
\draw[dashed]    (425,300) -- (425,350) ;

\draw    (400,287) -- (425,287) ;
\draw[dashed]    (375,287) -- (400,287) ;
\draw   (412,275) -- (412,300) ;
\draw[dashed]    (412,300) -- (412,325) ;
\draw[dashed]    (387,275) -- (387,300) ;
\draw[dashed]    (400,312) -- (425,312) ;
\draw    (406,287) -- (406,300) ;
\draw    (400,293) -- (412,293) ;
\node at (350,475) {\QR};
\end{tikzpicture}
\caption{Maximal differences of generations. $\gen(T_2)-\gen(T_1)=2$ for {\QRB}, $3$ for {\QRG} and $4$ for {\QR}.}
\label{fig:maxdiff}
\end{figure}

With this we can show
\begin{theorem}[$H^1$-Stability for {\QRG}]\label{thm:h1stab}
    For any initial mesh $\T_0$ consisting of parallelograms,
    {\QRG} is $H^1$-stable for polynomial degrees $p=2,\ldots,9$.
\end{theorem}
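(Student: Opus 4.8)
The plan is to assemble the result directly from the machinery already developed, treating \Cref{thm:h1stab} as the payoff of the abstract theory rather than a fresh argument. Concretely, I would fix $\mu=2$ together with the weights from \Cref{def:siebert}, verify criteria \ref{C1}--\ref{C3}, and then invoke \Cref{thm:genh1stab}. The three ingredients that make this go through are \Cref{lemma:choiceh} (turning weights into admissible $H_+,H_-$), \Cref{lemma} (the combinatorial estimate giving $\mu=2$), and the eigenvalue data of \Cref{app:tables} (the quantitative input for \ref{C3}).

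First I would establish \ref{C1}--\ref{C2}. By \Cref{lemma:choiceh} it suffices to show $h_z\sim h_T$ for every $z\in\mc V(T)$ and every $T\in\T$. The bound $h_z\lesssim h_T$, with a constant depending only on $\T_0$, was already derived in the discussion preceding \Cref{lemma}, and \Cref{lemma} supplies the reverse inequality $h_T\lesssim h_z$ at $\mu=2$ for parallelogram initial meshes. Combining the two yields $h_z\sim h_T$, so the associated maps $H_+,H_-$ satisfy \ref{C1}--\ref{C2}.

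Next I would verify \ref{C3} via the local generalized eigenvalue problem \eqref{eq:evalue}. Since the local form \eqref{eq:localC} of \ref{C3} involves only a single element $T$ and its nodal weights, and since \Cref{rem:calc} shows that after normalization only finitely many weight configurations can occur on a given reference element---because the admissible generation values for {\QRG} form the discrete set listed there and the intra-element ratio is controlled by \eqref{eq:hzratio}---it is enough to check positivity of the smallest eigenvalue of \eqref{eq:evalue} over this finite list. The reference elements to enumerate are exactly those that {\QRG} produces: the parallelograms arising from repeated red refinement and the triangles and subquadrilaterals arising from the green~1, green~2 and green~3 closure patterns. For each such reference shape, each admissible normalized weight configuration, and each polynomial degree $p=2,\ldots,9$, \Cref{app:tables} records $\lambda_{\min}>0$; taking the minimum over this finite collection gives a single positive constant $C=\lambda_{\min}^{-1}$, which secures \ref{C3}. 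With \ref{C1}--\ref{C3} in hand, \Cref{thm:genh1stab} immediately yields $\norm{\nabla\proj_h u}[0]\lesssim\norm{u}[1]$.

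The main obstacle is the bookkeeping behind \ref{C3}: one must be sure that the finite catalogue computed in \Cref{app:tables} genuinely exhausts every local situation {\QRG} can create. This splits into a geometric and a combinatorial part. Geometrically, one must argue that every element of a {\QRG} mesh is the affine image of one of the catalogued reference shapes---this is where the parallelogram hypothesis on $\T_0$ is essential, so that red refinement reproduces parallelograms and the minimal eigenvalue of \eqref{eq:evalue} is transformation-invariant as noted after \eqref{eq:evalue}. Combinatorially, one must confirm that the generation/distance weight data on the nodes of any such element reduces to one of the finitely many normalized configurations, which is precisely the content of \Cref{rem:calc} together with the explicit {\QRG} generation set. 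Once this enumeration is justified the conclusion is automatic; the restriction to $p=2,\ldots,9$ simply reflects that $\lambda_{\min}>0$ at $\mu=2$ fails for $p=1$ and deteriorates again for large $p$, as signalled by the degrading entries in \Cref{app:tables}.
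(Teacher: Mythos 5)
Your proposal is correct and follows essentially the same route as the paper, which states the theorem as an immediate consequence of \Cref{lemma} (giving $h_T\lesssim h_z$ at $\mu=2$), the discussion preceding it (giving $h_z\lesssim h_T$), \Cref{lemma:choiceh} for \ref{C1}--\ref{C2}, the $\mu=2$ eigenvalue data in \Cref{app:tables} for \ref{C3} on the triangular and parallelogram reference elements, and \Cref{thm:genh1stab}. Your added care about the enumeration of reference shapes and normalized weight configurations is exactly the content of \Cref{rem:calc} and the reason the paper tabulates the minimum over all configurations.
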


\begin{remark}[Hanging Nodes]\label{rem:hanging}
    The refinement strategy {\QR} 
    satisfies $h_T\lesssim h_z$ for
    $\mu=2$.
    However, due to hanging nodes, the localization
    properties of $\proj_h$ are too unfavorable such that
    $H^1$-stability can only be guaranteed for the restricted case
    $\mu=1$ and $p=1,2$.
\end{remark}

\subsection{General Quadrilaterals and Non-Linear Transformations}
\label{sec:nonlinear}

In \Cref{thm:h1stab} and \Cref{rem:hanging} we addressed meshes where
each element $T$ can be obtained by an affine transformation
$B_T:\hat T\rightarrow T$ onto $\hat T=[0,1]^2$
or $\hat T=\set{(\hat x,\hat y)^\intercal\in [0,1]^2:\;0\leq \hat x+\hat y\leq 1}$.
However, if we use the refinement strategy
{\QRB} from \cite{Funken2020} 
(see also \Cref{sec:review}) with an initial mesh consisting of parallelograms
-- such elements can only be obtained through a non-linear
transformation $B_T:\hat T\rightarrow T$.
The theory introduced above remains unchanged for this case.
However, the computation of the eigenvalue problem
in \cref{eq:evalue} is now more involved since it cannot be performed
entirely independent of $T$.

For a general quadrilateral $T\in\T$, we introduce the bilinear transformation
$B_T:[0,1]^2\rightarrow T$
\begin{align}\label{eq:bilinear}
    B_T
    \begin{pmatrix}
        \hat x\\
        \hat y
    \end{pmatrix}
    :=
    \begin{pmatrix}
        x_0\\
        y_0
    \end{pmatrix}
    +
    \begin{pmatrix}
        \cos(\theta) & -\sin(\theta)\\
        \sin(\theta)  &  \cos(\theta)
    \end{pmatrix}                
    \left[
    \begin{pmatrix}
         1 & s\\
         0 & 1
    \end{pmatrix}
    \begin{pmatrix}
        h_x & 0\\
        0     & h_y
    \end{pmatrix}
    \begin{pmatrix}
        \hat x\\
        \hat y
    \end{pmatrix}
    +
    \begin{pmatrix}
        \alpha\hat x\hat y\\
        \beta\hat x\hat y
    \end{pmatrix}
    \right],
\end{align}
where we used a convenient representation from \cite{QuadMesh}.
The free parameters $x_0$, $y_0$, $\theta$, $s$, $h_x$, $h_y$,
$\alpha$ and $\beta$ depend on $T$. Any general quadrilateral in 2D can be obtained
by using this transformation.

The parameters $(x_0,y_0)^\intercal\in\R^2$ describe translation of the reference point
$(0,0)^\intercal$. The parameter $\theta\in [0,2\pi)$ describes rotation.
The parameters $h_x,h_y>0$ describe stretching of the
horizontal side $(1, 0)^\intercal$ and the vertical side $(0, 1)^\intercal$,
respectively.

The parameter $s$ describes shearing of the square $[0,1]^2$. I.\,e.,
for $s=0$ (and $\alpha=\beta=0$), $T$ is a rectangle with side lengths
$h_x$ and $h_y$. The larger $s$, the larger the stretching of the rectangle to
a parallelogram. To maintain shape-regularity, the shearing $s$
has to remain bounded in absolute value.

Finally, $\alpha,\beta\in\R$ control the non-linear part
of the transformation. I.\,e., for $\alpha=\beta=0$, $T$ is a parallelogram.
Loosely speaking, for $\alpha>0$, $T$ becomes a trapezoid due to
shifting of the reference point $(1,1)^\intercal$ horizontally,
and similarly for $\beta>0$ vertically.

Computing the determinant of the Jacobian we obtain
\begin{align*}
    \left|\det DB_T
    \begin{pmatrix}
        \hat x\\
        \hat y
    \end{pmatrix}        
    \right|=
    h_xh_y+h_x\beta\hat x+h_y(\alpha-\beta s)\hat y.
\end{align*}
For $\alpha=\beta=0$, i.\,e., the affine case, this reduces to
$\left|\det DB_T\begin{pmatrix}\hat x\\\hat y\end{pmatrix}        
\right|=h_xh_y=|T|$. The new element mass matrix (without hanging
nodes) is
\begin{align*}
    M(T)=h_xh_yM_L+h_x\beta M_x+h_y(\alpha-\beta s)M_y
    \in\R^{\#\mc N(T)\times\#\mc N(T)},
\end{align*}
with
\begin{align*}
    (M_L)_{a,b}&:=\int_{[0,1]^2}\hat\varphi_a(\hat x,\hat y)
    \hat\varphi_b(\hat x,\hat y)\d\hat x\d\hat y,\\
    (M_x)_{a,b}&:=\int_{[0,1]^2}\hat\varphi_a(\hat x,\hat y)
    \hat\varphi_b(\hat x,\hat y)\hat x\d\hat x\d\hat y,\\
    (M_y)_{a,b}&:=\int_{[0,1]^2}\hat\varphi_a(\hat x,\hat y)
    \hat\varphi_b(\hat x,\hat y)\hat y\d\hat x\d\hat y,
\end{align*}
for $a,b\in\mc N(\hat T)$.
The matrix $A(T)$ from \cref{eq:evalue} is adjusted accordingly
to
\begin{align*}
    A(T)=h_xh_yA_L+h_x\beta A_x+h_y(\alpha-\beta s)A_y.
\end{align*}
Dividing both sides of \cref{eq:evalue} by $h_xh_y$, we obtain the
generalized eigenvalue problem
\begin{align*}
    \left(A_L+\frac{\beta}{h_y}A_x
    +\frac{\alpha-\beta s}{h_x}A_y\right)x=:\hat Ax=\lambda\hat Mx:=
    \lambda\left(M_L+\frac{\beta}{h_y}M_x+
    \frac{\alpha-\beta s}{h_x}M_y\right)x,\quad
    x\in\R^{\#\mc N(\hat T)}.
\end{align*}

We can now compute the smallest eigenvalue of the above problem
under some reasonable assumptions on the degree
of non-linearity.
E.\,g., for some constant $0<c<1$, assume
the non-linearity of $T$ is bounded as
\begin{align}\label{eq:nonlinear}
    |\alpha|\leq ch_x,\quad
    |\beta|\leq c\min\set{h_y,\frac{h_x}{s}}.
\end{align}
These assumptions were used in, e.\,g., \cite{QuadMesh}
(with $c=1/4$)
to show
$H^1$-stability of a Scott-Zhang type projector onto general
anisotropic quadrilateral meshes.
With this we obtain the bounds
\begin{align*}
    \frac{|\beta|}{h_y}\leq c,\quad
    \frac{|\alpha-\beta s|}{h_x}\leq 2c.
\end{align*}
Thus, for any given adaptive refinement strategy in 2D with general quadrilaterals,
$H^1$-stability follows from \eqref{eq:nonlinear}, Table~\ref{tab:nonlinear} and the following lemma.

\begin{lemma}[$H^1$-Stability General Quadrilaterals]
    Consider the generalized eigenvalue problem
    \begin{align}\label{eq:evnonlinear}
        \hat A x=\lambda\hat Mx,\quad x\in\R^n,\;n\in\N,
    \end{align}
    with
    \begin{align*}
        \hat A:=A_L+c_1A_x+c_2A_y,\quad
        \hat M:=M_L+c_1M_x+c_2M_y,\quad
        (c_1,c_2)^\intercal\in[-c, c]\times [-2c,2c],\;c>0,
    \end{align*}
    satisfying
    \begin{enumerate}[label=(\roman*)]
        \item    $\hat M$ is symmetric positive definite for any
                    $ (c_1,c_2)^\intercal\in[-c, c]\times [-2c,2c]$,
        \item    $A_L$, $A_x$ and $A_y$ are symmetric.
    \end{enumerate}
    
    Let $\lambda_{\min}(c_1,c_2)$ denote the smallest eigenvalue of
    \cref{eq:evnonlinear}. If the smallest eigenvalue satisfies
    \begin{align*}
        \inf_{\substack{-c\leq c_1\leq c,\\
        -2c\leq c_2\leq 2c}}\lambda_{\min}(c_1,c_2)
        \leq 0,
    \end{align*}
    then we have $\lambda_{\min}(c_1,c_2)\leq 0$ for some
    \begin{align*}
        (c_1,c_2)^\intercal\in\left\{
            \begin{pmatrix}
                -c\\
                -2c
            \end{pmatrix},
            \begin{pmatrix}
                c\\
                -2c
            \end{pmatrix},
            \begin{pmatrix}
                -c\\
                2c
            \end{pmatrix},
            \begin{pmatrix}
                c\\
                2c
            \end{pmatrix},
            \begin{pmatrix}
                0\\
                0
            \end{pmatrix}
        \right\}.
    \end{align*}
    In other words, to ensure all eigenvalues are strictly
    positive we only have to check 5
    combinations $(c_1,c_2)^\intercal$.
\end{lemma}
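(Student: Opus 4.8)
The plan is to translate the statement about the sign of the smallest generalized eigenvalue into a convex-geometry statement about an affine family of symmetric forms, and then invoke the elementary fact that an affine function on a rectangle attains its extrema at the vertices. First I would record the Rayleigh-quotient characterization. Since $A_L$, $A_x$, $A_y$ are symmetric by (ii), the matrix $\hat A(c_1,c_2):=A_L+c_1A_x+c_2A_y$ is symmetric, and since $\hat M(c_1,c_2)$ is SPD by (i), the pencil in \eqref{eq:evnonlinear} is symmetric-definite with real eigenvalues and
\[
\lambda_{\min}(c_1,c_2)=\min_{x\neq 0}\frac{x^\intercal\hat A(c_1,c_2)x}{x^\intercal\hat M(c_1,c_2)x}.
\]
Because the denominator is strictly positive for $x\neq 0$, the sign of each Rayleigh quotient equals the sign of its numerator, so
\[
\lambda_{\min}(c_1,c_2)\leq 0 \iff \exists\,x\neq 0:\ x^\intercal\hat A(c_1,c_2)x\leq 0 \iff \hat A(c_1,c_2)\text{ is not positive definite.}
\]

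The key structural observation is that $\hat A$ depends affinely on $(c_1,c_2)$. Hence for each fixed $x$ the map $f_x(c_1,c_2):=x^\intercal\hat A(c_1,c_2)x=x^\intercal A_Lx+c_1\,x^\intercal A_xx+c_2\,x^\intercal A_yx$ is affine in $(c_1,c_2)$. Writing the bad set
\[
S:=\set{(c_1,c_2):\lambda_{\min}(c_1,c_2)\leq 0}=\bigcup_{x\neq 0}\set{(c_1,c_2):f_x(c_1,c_2)\leq 0},
\]
I see that $S$ is a union of closed half-planes; equivalently, its complement, the set where $\hat A$ is positive definite, is an intersection of open half-planes and therefore convex, being the affine preimage of the convex cone of SPD matrices.

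Then I would run the vertex argument. The rectangle $R:=[-c,c]\times[-2c,2c]$ is compact and $\lambda_{\min}$ is continuous on $R$ (continuity of the smallest eigenvalue of a symmetric-definite pencil, using that $\hat M$ stays SPD on $R$ by (i)); hence the hypothesis $\inf_R\lambda_{\min}\leq 0$ is attained at some $p^*\in R$ with $p^*\in S$, i.e.\ $f_{x^*}(p^*)\leq 0$ for some $x^*\neq 0$. Since $f_{x^*}$ is affine, its minimum over the polytope $R$ is attained at a vertex $v$, and $\min_R f_{x^*}\leq f_{x^*}(p^*)\leq 0$ forces $f_{x^*}(v)\leq 0$ at one of the four corners. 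Thus $\hat A(v)$ is not positive definite and $\lambda_{\min}(v)\leq 0$ at a corner, which is one of the five listed points; the extra point $(0,0)^\intercal$ is harmless (it is the affine baseline already covered by \Cref{app:tables}) and is not needed for the argument.

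I expect the only real subtlety to be the correct reformulation in the first two steps, namely recognizing that ``smallest generalized eigenvalue $\leq 0$'' is exactly ``$\hat A$ not positive definite'', and that the affine dependence of $\hat A$ makes the bad set a union of half-planes (equivalently the good set convex). Once that is in place, the remaining content is the standard facts that an affine function on a rectangle attains its minimum at a vertex and that $\lambda_{\min}$ is continuous on the compact box, so the infimum is attained; both are routine.
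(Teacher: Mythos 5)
Your proof is correct and follows essentially the same route as the paper: reduce to the sign of the Rayleigh-quotient numerator $x^\intercal\hat A x$ (possible since $\hat M$ is SPD) and exploit that, for fixed $x$, this numerator is affine in $(c_1,c_2)$ so its minimum over the box sits at a corner. Your packaging via the vertex property of affine functions is in fact a slightly cleaner rendering of the paper's $2^3$ sign-case analysis (which is phrased in terms of definiteness of $A_L$, $A_x$, $A_y$ and so does not literally cover indefinite matrices, whereas your per-$x$ formulation does), and it correctly observes that the four corners alone suffice, making the point $(0,0)^\intercal$ redundant for the implication.
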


\begin{proof}
    The smallest eigenvalue of \cref{eq:evnonlinear} is characterized by
    minimizing the Rayleigh quotient
    \begin{align*}
        \lambda_{\min}(c_1,c_2)=\min_{x\in\R^N\setminus\{0\}}
        R_{c_1,c_2}(x):=
        \frac{\inp{x}{\hat Ax}}{\inp{x}{\hat Mx}}.
    \end{align*}
    The denominator is always positive since we assumed $\hat M>0$, i.\,e.,,
    $\hat M$ is symmetric positive definite.
    Thus, $\lambda_{\min}(c_1,c_2)\leq 0$ can only occur if
    $\inp{x}{\hat Ax}\leq 0$. For the latter we have
    \begin{align}\label{eq:combos}
        \inp{x}{\hat Ax}=
        \inp{x}{A_Lx}+
        c_1\inp{x}{A_xx}+
        c_2\inp{x}{A_yx}.
    \end{align}
    Each of the matrices $A_L$, $A_x$ and $A_y$ have real eigenvalues
    that can be either positive or negative. The statement of the lemma
    simply follows by considering the $2^3=8$ possibilities.
    
    E.g.,  if $A_L\leq 0$, then we can obtain
    $\lambda_{\min}(c_1,c_2)\leq 0$ for $c_1=c_2=0$.
    If $A_L>0$ and $A_x>0$, $A_y\leq 0$, then, if the sum
    in \Cref{eq:combos} is negative or zero
    for some $x\in\R^n\setminus\{0\}$,
    it will certainly hold for $c_1=-c$ and $c_2=2c$.
    Analogously for all other cases.
\end{proof}

The quadrilaterals of a blue pattern can be obtained by the bilinear transformation $B_T$ from \eqref{eq:bilinear} with $c_2\in \left\{-1/2,0,1\right\}$ and
$c_1=0$.
It thus follows from \Cref{tab:nonlinearRB} and \Cref{lemma}

\begin{theorem}[$H^1$-Stability for {\QRB}]\label{thm:stabqrb}
For any initial mesh $\T_0$ consisting of parallelograms,
{\QRB} is $H^1$-stable for polynomial degrees $p=2,\ldots,9$.
\end{theorem}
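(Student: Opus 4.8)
The plan is to apply the general framework of \Cref{thm:genh1stab}: it suffices to exhibit a stable quasi-interpolation operator as in \Cref{ass:clement} and to verify the three criteria \ref{C1}--\ref{C3} for the weights of \Cref{def:siebert} with $\mu=2$. Since {\QRB} produces conforming meshes (the blue patterns together with the \textsc{Closure} step remove all hanging nodes) whose elements are parallelograms and blue-pattern quadrilaterals, \Cref{ass:clement} is supplied by the Cl\'{e}ment-type construction of \cite{CCClement}, which applies to exactly such quadrilateral meshes. It then remains to establish \ref{C1}--\ref{C3}.

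For \ref{C1}--\ref{C2} I would invoke \Cref{lemma:choiceh}, which reduces both criteria to the equivalence $h_z\sim h_T$ for every $z\in\mc V(T)$. The easy inequality $h_z\lesssim h_T$ holds for any admissible refinement sequence, as already shown, with a constant depending only on $\T_0$. The critical inequality $h_T\lesssim h_z$ is precisely \Cref{lemma} applied to {\QRB} with $\mu=2$; combining the two gives $h_z\sim h_T$ and hence \ref{C1}--\ref{C2}.

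The heart of the proof is \ref{C3}, which by \cref{eq:evalue} amounts to strict positivity of the smallest eigenvalue of a local generalized eigenvalue problem on each reference configuration. For the parallelogram elements this eigenvalue is affine-invariant and can be read off from the regular-mesh tables in \Cref{app:tables}. The blue-pattern quadrilaterals, however, are genuinely non-affine, so I would pass to the framework of \Cref{sec:nonlinear}, where the problem takes the form \cref{eq:evnonlinear} with parameters $(c_1,c_2)$. Using that an affine change of the parent parallelogram leaves the eigenvalue unchanged, one normalizes the parent to the reference square; the blue sub-quadrilaterals then correspond to $c_1=0$ and $c_2\in\set{-1/2,0,1}$. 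Accounting for the finitely many admissible weight configurations via \Cref{rem:calc}, I would verify that the smallest eigenvalue remains strictly positive for all these data and all $p=2,\ldots,9$; this is exactly the content of \Cref{tab:nonlinearRB}, and positivity throughout yields \ref{C3} and thus the claim.

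The main obstacle is \ref{C3} for the non-affine blue elements: unlike the parallelogram case, the eigenvalue no longer decouples completely from the geometry, so one must carry out the reduction to the $(c_1,c_2)$-parametrized problem \cref{eq:evnonlinear}, confirm that after affine normalization the blue pieces land on the discrete set $c_2\in\set{-1/2,0,1}$, and then check positivity across all weight configurations and polynomial degrees. A secondary point requiring care is that $\mu=2$ is genuinely admissible for {\QRB} (\Cref{lemma}): a larger $\mu$ would relax the size-variation constraint but could destroy the positivity needed in \ref{C3}.
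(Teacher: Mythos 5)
Your proposal is correct and follows essentially the same route as the paper: \Cref{lemma} with $\mu=2$ supplies the weight equivalence $h_z\sim h_T$ needed for \ref{C1}--\ref{C2} via \Cref{lemma:choiceh}, and \ref{C3} is reduced through the bilinear parametrization of \Cref{sec:nonlinear} to the eigenvalue problem \cref{eq:evnonlinear} with $c_1=0$, $c_2\in\{-1/2,0,1\}$, whose positivity for $p=2,\ldots,9$ is exactly what \Cref{tab:nonlinearRB} records. The paper's own proof is just this chain stated in one line, so your more explicit account matches it step for step.
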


\bibliographystyle{acm}
\bibliography{literature}

\begin{thebibliography}{10}

\bibitem{codes}
{\sc Ali, M., Funken, S.~A., and Schmidt, A.}
\newblock {Codes on the $H^1$-Stability of the $L^2$-Projection}.
\newblock Code download at
  \texttt{https://github.com/aschmidtuulm/h1-stability}, 2020.

\bibitem{Aurada}
{\sc {Aurada, M.}, {Feischl, M.}, {Kemetm\"uller, J.}, {Page, M.}, and
  {Praetorius, D.}}
\newblock {Each $H^{1/2}$-Stable Projection yields Convergence and
  Quasi-Optimality of Adaptive FEM with Inhomogeneous Dirichlet Data in $R^d$}.
\newblock {\em ESAIM: M2AN 47}, 4 (2013), 1207--1235.

\bibitem{BankRG}
{\sc Bank, R.~E., Sherman, A.~H., and Weiser, A.}
\newblock {Some Refinement Algorithms and Data Structures for Regular Local
  Mesh Refinement}.
\newblock {\em Scientific Computing, Applications of Mathematics and Computing
  to the Physical Sciences 1\/} (1983), 3--17.

\bibitem{BY}
{\sc Bank, R.~E., and Yserentant, H.}
\newblock {On the $H^1$-Stability of the $L_2$-Projection onto Finite Element
  Spaces}.
\newblock {\em Numerische Mathematik 126\/} (2014), 361--381.

\bibitem{QuadMesh}
{\sc Braack, M.}
\newblock {Anisotropic $H^1$-Stable Projections on Quadrilateral Meshes}.
\newblock In {\em Numerical Mathematics and Advanced Applications\/} (Berlin,
  Heidelberg, 2006), A.~B. de~Castro, D.~G{\'o}mez, P.~Quintela, and
  P.~Salgado, Eds., Springer Berlin Heidelberg, pp.~495--503.

\bibitem{BPS}
{\sc Bramble, J.~H., Pasciak, J.~E., and Steinbach, O.}
\newblock {On the Stability of the $L^2$ Projection in $H^1(\Omega)$}.
\newblock {\em Mathematics of Computation 71}, 237 (2002), 147--156.

\bibitem{CC}
{\sc Carstensen, C.}
\newblock {Merging the Bramble-Pasciak-Steinbach and the Crouzeix-Thomee
  Criterion for $H^1$-Stability of the $L^2$-Projection onto Finite Element
  Spaces}.
\newblock {\em Mathematics of Computation 71}, 237 (2002), 157--163.

\bibitem{CCRGB}
{\sc Carstensen, C.}
\newblock {An Adaptive Mesh-Refining Algorithm Allowing for an $H^1$-Stable
  $L^2$-Projection onto Courant Finite Element Spaces}.
\newblock {\em Constructive Approximation 20}, 4 (oct 2003), 549--564.

\bibitem{CCClement}
{\sc Carstensen, C., and Hu, J.}
\newblock {Hanging Nodes in the Unifying Theory of a Posteriori Finite Element
  Error Control}.
\newblock {\em Journal of Computational Mathematics 27}, 2/3 (2009), 215--236.

\bibitem{CC99}
{\sc Carstensen, C., and Verfürth, R.}
\newblock {Edge Residuals Dominate A Posteriori Error Estimates for Low Order
  Finite Element Methods}.
\newblock {\em SIAM Journal on Numerical Analysis 36}, 5 (1999), 1571--1587.

\bibitem{Clement}
{\sc Cl\'ement, P.}
\newblock {Approximation by Finite Element Functions using Local
  Regularization}.
\newblock {\em ESAIM: Mathematical Modelling and Numerical Analysis -
  Mod\'elisation Math\'ematique et Analyse Num\'erique 9}, R2 (1975), 77--84.

\bibitem{CT}
{\sc Crouzeix, M., and Thomée, V.}
\newblock {The Stability in $L_p$ and $W_p^1$ of the $L_2$-Projection onto
  Finite Element Function Spaces}.
\newblock {\em Mathematics of Computation 48}, 178 (1987), 521--532.

\bibitem{Demco}
{\sc Demko, S., Moss, W.~F., and Smith, P.~W.}
\newblock {Decay Rates for Inverses of Band Matrices}.
\newblock {\em Mathematics of Computation 43}, 168 (1984), 491--499.

\bibitem{Funken2020}
{\sc Funken, S.~A., and Schmidt, A.}
\newblock {Adaptive Mesh Refinement in 2D -- An Efficient Implementation in
  Matlab}.
\newblock {\em Computational Methods in Applied Mathematics 20}, 3 (jul 2020),
  459--479.

\bibitem{GaspozNVB}
{\sc Gaspoz, F.~D., Heine, C.-J., and Siebert, K.~G.}
\newblock {Optimal Grading of the Newest Vertex Bisection and $H^1$-Stability
  of the $L_2$-Projection}.
\newblock {\em IMA Journal of Numerical Analysis 36}, 3 (10 2015), 1217--1241.

\bibitem{Siebert}
{\sc Gaspoz, F.~D., Heine, C.-J., and Siebert, K.~G.}
\newblock {An Alternative Proof of the $H^1$-Stability of the $L_2$-Projection
  on Graded Meshes}.
\newblock Tech. rep., Fachbereich Mathematik, Universität Stuttgart, 2019.

\bibitem{DirkNVB}
{\sc Karkulik, M., Pavlicek, D., and Praetorius, D.}
\newblock {On 2D Newest Vertex Bisection: Optimality of Mesh-Closure and
  $H^1$-Stability of $L_2$-Projection}.
\newblock {\em Constructive Approximation 38}, 2 (may 2013), 213--234.

\bibitem{Kobbelt}
{\sc Kobbelt, L.}
\newblock {Interpolatory Subdivision on Open Quadrilateral Nets with Arbitrary
  Topology}.
\newblock {\em Computer Graphics Forum 15}, 3 (1996), 409--420.

\bibitem{McLean1999}
{\sc McLean, W., and Steinbach, O.}
\newblock {Boundary Element Preconditioners for a Hypersingular Integral
  Equation on an Interval}.
\newblock {\em Advances in Computational Mathematics 11}, 4 (1999), 271--286.

\bibitem{SZ}
{\sc Scott, L.~R., and Zhang, S.}
\newblock {Finite Element Interpolation of Nonsmooth Functions Satisfying
  Boundary Conditions}.
\newblock {\em Mathematics of Computation 54}, 190 (1990), 483--493.

\bibitem{Steinbach2000}
{\sc Steinbach, O.}
\newblock {On a Hybrid Boundary Element Method}.
\newblock {\em Numerische Mathematik 84}, 4 (feb 2000), 679--695.

\bibitem{Steinbach1998}
{\sc Steinbach, O., and Wendland, W.}
\newblock {The Construction of some Efficient Preconditioners in the Boundary
  Element Method}.
\newblock {\em Advances in Computational Mathematics 9}, 1/2 (1998), 191--216.

\bibitem{Tantardini}
{\sc Tantardini, F., and Veeser, A.}
\newblock {The $L^2$-Projection and Quasi-Optimality of Galerkin Methods for
  Parabolic Equations}.
\newblock {\em SIAM Journal on Numerical Analysis 54}, 1 (2016), 317--340.

\bibitem{verfuerth}
{\sc Verf{\"u}rth, R.}
\newblock {\em A review of a posteriori error estimation and adaptive
  mesh-refinement techniques}.
\newblock John Wiley \& Sons Inc, 1996.

\bibitem{Wahlbin1995}
{\sc Wahlbin, L.~B.}
\newblock {\em {Superconvergence in Galerkin Finite Element Methods}}.
\newblock Springer Berlin Heidelberg, 1995.

\bibitem{yserentant1993}
{\sc Yserentant, H.}
\newblock {Old and New Convergence Proofs for Multigrid Methods}.
\newblock {\em Acta Numerica 2\/} (1993), 285–326.

\end{thebibliography}

\appendix
\newpage

\section{Tables}\label{app:tables}


\begin{table}[H]
\caption{Triangular element with no handing nodes.}
\label{tab:triangular}
\begin{tabular}{c||r}
\hline
\multicolumn{2}{c}{$\mu = 1$} \\ \hline \hline
$p$ & \multicolumn{1}{c}{$\lambda_\mathrm{min}$}\\ \hline
1 & $0.917136664350836$\\
2 &$0.951669246453928$ \\
3 & $0.964185270379702$ \\
4 & $0.968240261180467$\\
5 & $0.966806486169801$\\
6 & $0.959408791893528$\\
7 & $0.943501297553834$\\
8 & $0.913475197477444$\\
9 & $0.859176805106264$\\
10& $0.762190410650697$\\
11 & $0.589258437047241$\\
12 & $0.279511860034300$\\
\end{tabular}\hspace*{2cm}
\begin{tabular}{c||r}
\hline
\multicolumn{2}{c}{$\mu = 2$} \\ \hline \hline
$p$ & \multicolumn{1}{c}{$\lambda_\mathrm{min}$}\\ \hline
1 & $0.658493649053890$\\
2 & $0.800813482191006$\\
3 & $0.852396026216779$\\
4 &$0.869107942296914$\\
5 & $0.863198896177376$\\
6 & $0.832710628261139$\\
7 & $0.767150748223897$\\
8 & $0.643403571141314$\\
9 & $0.419622502039389$\\
10 &$0.019910501521261$ \\
11 & $-0.692797559342644$\\
12 & $-1.969362428534202$\\
\end{tabular}\\ \vspace*{2ex}
\begin{tabular}{c||r}
\hline
\multicolumn{2}{c}{$\mu = 3$} \\ \hline \hline
$p$ & \multicolumn{1}{c}{$\lambda_\mathrm{min}$}\\ \hline
1 & $0.192692294634035$\\
2 & $0.529130834062903$\\
3 &$0.651069958003112$ \\
4 & $0.690576276364528$\\
5 &$0.676607521609456$ \\
6 &$ 0.604534444362571$\\
7 & $0.449553442778550$ \\
8 & $0.157019938498792$\\
9 & $-0.371989788261829$\\
10& $-1.316893380981366$\\
11 &$ -3.001707463114409$ \\
12 & $-6.019457067017003$\\
\end{tabular}\hspace*{2cm}
\begin{tabular}{c||r}
\hline
\multicolumn{2}{c}{$\mu = 4$} \\ \hline \hline
$p$ & \multicolumn{1}{c}{$\lambda_\mathrm{min}$}\\ \hline
1 & $-0.536778579257494$\\
2 & $0.103660669859525$\\
3 & $0.335782117975507$\\
4 & $0.410985740336119$\\
5 & $0.384395032798194$\\
6 &$ 0.247197827175128$\\
7 & $-0.047821632992457$\\
8 & $-0.604683929864082$\\
9 & $-1.611698740822758$\\
10& $-3.410402743154268$\\
11 & $-6.617589017041926$ \\
12 & $-12.362130928403941$\\
\end{tabular}
\end{table}


\begin{table}
\caption{Parallelogram element with no hanging nodes. Note that for $\mu=2$ and $p=1$, $\lambda_{\min}$ is \emph{exactly} 0, i.e., this represents a ``border'' case.}
\label{tab:quadrilateral0}
\begin{tabular}{c||r}
\hline
\multicolumn{2}{c}{$\mu = 1$} \\ \hline \hline
$p$ & \multicolumn{1}{c}{$\lambda_\mathrm{min}$}\\ \hline
1 & $0.757359312880714$ \\
2 &  $0.909009742330268$ \\
3 & $0.944316491021463$\\
4 & $0.956248179064318$\\
5 &  $0.958418441371873$\\
6 &  $0.952907970706445$ \\
7 & $0.935573718078176$ \\
8 & $0.890974043187541$ \\
9 & $0.773040724644832$\\
10&$0.445576028516330$ \\
\end{tabular}\hspace*{2cm}
\begin{tabular}{c||r}
\hline
\multicolumn{2}{c}{$\mu = 2$} \\ \hline \hline
$p$ & \multicolumn{1}{c}{$\lambda_\mathrm{min}$}\\ \hline
1 &  $0.000000000000000$ \\
2 & $0.625000000000000$ \\
3 &  $0.770510421645976$\\
4 & $0.819684730309997$\\
5 & $0.828629076508982$ \\
6 &  $0.805918661652962$\\
7 & $0.734478653655678$ \\
8 & $0.550669106212760$ \\
9 & $0.064628121319203$\\
10& $-1.284958792632702$\\
\end{tabular}\\ \vspace*{2ex}
\begin{tabular}{c||r}
\hline
\multicolumn{2}{c}{$\mu = 3$} \\ \hline \hline
$p$ & \multicolumn{1}{c}{$\lambda_\mathrm{min}$}\\ \hline
1 &  $-1.363961030678928$ \\
2 & $0.113514613495402$\\
3 & $0.457495579824147$ \\
4 & $0.573741729216471$ \\
5 &  $0.594885815075774$\\
6 &  $0.541199279365591$\\
7 & $0.372317884428622$ \\
8 & $-0.062200722793164$\\
9 & $-1.211182670394353$\\
10& $-4.401553542490907$\\
\end{tabular}\hspace*{2cm}
\begin{tabular}{c||r}
\hline
\multicolumn{2}{c}{$\mu = 4$} \\ \hline \hline
$p$ & \multicolumn{1}{c}{$\lambda_\mathrm{min}$}\\ \hline
1 &  $-3.500000000000000$ \\
2 &  $-0.687500000000000$\\
3 & $-0.032703102593110$ \\
4 & $0.188581286394985$ \\
5 & $0.228830844290421$ \\
6 &  $0.126633977438330$\\
7 &  $-0.194846058549442$\\
8 & $-1.021989022042573$\\
9 & $-3.209173454063614$ \\
10& $-9.282314566847294$\\
\end{tabular}
\end{table}


\begin{table}
\caption{General quadrilateral element with
no hanging nodes and $c=1/4$, see \cref{eq:nonlinear}.}
\label{tab:nonlinear}
\begin{tabular}{c||r}
\hline
\multicolumn{2}{c}{$\mu = 1$} \\ \hline \hline
$p$ & \multicolumn{1}{c}{$\lambda_\mathrm{min}$}\\ \hline
1 & $0.747149409107802$ \\
2 &  $0.905648684583979$ \\
3 & $0.942338691490434$\\
4 & $0.954647079121367$\\
5 &  $0.956761324592774$\\
6 &  $0.950779093984766$ \\
7 & $0.932233847358632$ \\
8 & $0.884721376019483$ \\
9 & $0.759489811676522$\\
10&$0.412406997710229$ \\
\end{tabular}\hspace*{2cm}
\begin{tabular}{c||r}
\hline
\multicolumn{2}{c}{$\mu = 2$} \\ \hline \hline
$p$ & \multicolumn{1}{c}{$\lambda_\mathrm{min}$}\\ \hline
1 &  $-0.042078284125092$ \\
2 & $0.611148004334341$ \\
3 &  $0.762359276203259$\\
4 & $0.813086084543036$\\
5 & $0.821799567415631$ \\
6 &  $0.797144878710977$\\
7 & $0.720713976514372$ \\
8 & $0.524899861811526$ \\
9 & $0.008780468029065$\\
10& $-1.421658994070072$\\
\end{tabular}\\ \vspace*{2ex}
\begin{tabular}{c||r}
\hline
\multicolumn{2}{c}{$\mu = 3$} \\ \hline \hline
$p$ & \multicolumn{1}{c}{$\lambda_\mathrm{min}$}\\ \hline
1 &  $-1.463432454588482$ \\
2 & $0.080769035544651$\\
3 & $0.438226589642168$ \\
4 & $0.558142787768123$ \\
5 &  $0.578741121720422$\\
6 &  $0.520458398399101$\\
7 & $0.339778724066694$ \\
8 & $-0.123118212347723$\\
9 & $-1.343204346427102$\\
10& $-4.724707491574776$\\
\end{tabular}\hspace*{2cm}
\begin{tabular}{c||r}
\hline
\multicolumn{2}{c}{$\mu = 4$} \\ \hline \hline
$p$ & \multicolumn{1}{c}{$\lambda_\mathrm{min}$}\\ \hline
1 &  $-3.689352278562916$ \\
2 &  $-0.749833980495465$\\
3 & $-0.069383257085339$ \\
4 & $0.158887380443665$ \\
5 & $0.198098053370336$ \\
6 &  $0.087151954199392$\\
7 &  $-0.256787105685328$\\
8 & $-1.137950621848126$\\
9 & $-3.460487893869209$ \\
10& $-9.897465473315364$\\
\end{tabular}
\end{table}


\begin{table}
\caption{General quadrilateral element with no hanging nodes,
$c_1 =0$ and $c_2 \in \left\{-1/2,0,1\right\}$, see \cref{eq:evnonlinear}.}
\label{tab:nonlinearRB}
\begin{tabular}{c||r}
\hline
\multicolumn{2}{c}{$\mu = 1$} \\ \hline \hline
$p$ & \multicolumn{1}{c}{$\lambda_\mathrm{min}$}\\ \hline
1 & $0.752122198173689$ \\
2 &  $0.907368513818204$ \\
3 & $0.943368537006528$\\
4 & $0.955486626928499$\\
5 &  $0.957630538869594$\\
6 &  $0.951888510846911$ \\
7 & $0.933947667570817$ \\
8 & $0.887854856425997$ \\
9 & $0.766127960891231$\\
10&$0.428435362614456$ \\
\end{tabular}\hspace*{2cm}
\begin{tabular}{c||r}
\hline
\multicolumn{2}{c}{$\mu = 2$} \\ \hline \hline
$p$ & \multicolumn{1}{c}{$\lambda_\mathrm{min}$}\\ \hline
1 &  $-0.021583827383621$ \\
2 & $0.618235971544799$ \\
3 &  $0.766603599479458$\\
4 & $0.816546129999960$\\
5 & $0.825381877897596$ \\
6 &  $0.801717140994429$\\
7 & $0.727777178620049$ \\
8 & $0.537813938357047$ \\
9 & $0.036138407431271$\\
10& $-1.355600967716399$\\
\end{tabular}\\ \vspace*{2ex}
\begin{tabular}{c||r}
\hline
\multicolumn{2}{c}{$\mu = 3$} \\ \hline \hline
$p$ & \multicolumn{1}{c}{$\lambda_\mathrm{min}$}\\ \hline
1 &  $-1.414984357506708$ \\
2 & $0.097524713816906$\\
3 & $0.448260004468708$ \\
4 & $0.566322200392669$ \\
5 &  $0.587209564099582$\\
6 &  $0.531267048259225$\\
7 & $0.356475858596323$ \\
8 & $-0.092589838646902$\\
9 & $-1.278531243800609$\\
10& $-4.568548891511071$\\
\end{tabular}\hspace*{2cm}
\begin{tabular}{c||r}
\hline
\multicolumn{2}{c}{$\mu = 4$} \\ \hline \hline
$p$ & \multicolumn{1}{c}{$\lambda_\mathrm{min}$}\\ \hline
1 &  $-3.597127223226294$ \\
2 &  $-0.717938128048401$\\
3 & $-0.050283802342439$ \\
4 & $0.174457584999823$ \\
5 & $0.214218450539184$ \\
6 &  $0.107727134474930$\\
7 &  $-0.225002696209782$\\
8 & $-1.079837277393274$\\
9 & $-3.337377166559288$ \\
10& $-9.600204354723619$\\
\end{tabular}
\end{table}

\begin{table}
\caption{Parallelogram element with 1 hanging node.}
\label{tab:quadrilateral1}
\begin{tabular}{c||r}
\hline
\multicolumn{2}{c}{$\mu = 1$} \\ \hline \hline
$p$ & \multicolumn{1}{c}{$\lambda_\mathrm{min}$}\\ \hline
1 & $0.5713583436501$\\
2 & $0.5111112137989$\\
3 & $-0.4908310649897$\\
4 & $-5.0403752099103$\\
5 & $-28.0738045059095$ \\

\end{tabular}\hspace*{2cm}
\begin{tabular}{c||r}
\hline
\multicolumn{2}{c}{$\mu = 2$} \\ \hline \hline
$p$ & \multicolumn{1}{c}{$\lambda_\mathrm{min}$}\\ \hline
1 & $-0.7665695784117$\\
2 & $-1.0321608739122$\\
3 & $-5.1703397243886$\\
4 & $-23.9128701137892$\\
5 & $-118.8224619748803$\\

\end{tabular}\\ \vspace*{2ex}
\begin{tabular}{c||r}
\hline
\multicolumn{2}{c}{$\mu = 3$} \\ \hline \hline
$p$ & \multicolumn{1}{c}{$\lambda_\mathrm{min}$}\\ \hline
1 & $-3.1761016413483$\\
2 & $-3.8842461768688$ \\
3 & $ -13.7037380407129$\\
4 &$-57.9779496110135$ \\
5 & $-282.2556307086560$ \\

\end{tabular}\hspace*{2cm}
\begin{tabular}{c||r}
\hline
\multicolumn{2}{c}{$\mu = 4$} \\ \hline \hline
$p$ & \multicolumn{1}{c}{$\lambda_\mathrm{min}$}\\ \hline
1 & $-6.9495631028529$ \\
2 & $-8.5676118293778$ \\
3 & $-27.3679339412646$\\
4 & $-111.5515607551863$\\
5 & $-538.2010788870381$ \\

\end{tabular}
\end{table}

\begin{table}
\caption{Parallelogram element with 2 hanging nodes.}
\label{tab:quadrilateral2}
\begin{tabular}{c||r}
\hline
\multicolumn{2}{c}{$\mu = 1$} \\ \hline \hline
$p$ & \multicolumn{1}{c}{$\lambda_\mathrm{min}$}\\ \hline
1 & $0.3509421918476$\\
2 & $0.1968076252802$ \\
3 & $-1.3306311581102$ \\
4 & $-8.3677990882113$\\
5 & $-42.9093635066812$\\
\end{tabular}\hspace*{2cm}
\begin{tabular}{c||r}
\hline
\multicolumn{2}{c}{$\mu = 2$} \\ \hline \hline
$p$ & \multicolumn{1}{c}{$\lambda_\mathrm{min}$}\\ \hline
1 & $-1.6749751488843$ \\
2 & $-2.3102130737245$\\
3 & $-8.6052776052544$\\
4 & $-37.6077009566311$ \\
5 & $-179.9645530924988$\\
\end{tabular}\\ \vspace*{2ex}
\begin{tabular}{c||r}
\hline
\multicolumn{2}{c}{$\mu = 3$} \\ \hline \hline
$p$ & \multicolumn{1}{c}{$\lambda_\mathrm{min}$}\\ \hline
1 &  $-5.3235370099972$\\
2 & $-6.8252147095288$ \\
3 & $-21.7065019476733 $\\
4 & $-90.2671005455771$\\
5 & $-426.7931514447289$ \\
\end{tabular}\hspace*{2cm}
\begin{tabular}{c||r}
\hline
\multicolumn{2}{c}{$\mu = 4$} \\ \hline \hline
$p$ & \multicolumn{1}{c}{$\lambda_\mathrm{min}$}\\ \hline
1 &  $-11.0373881699796$\\
2 & $-13.8959588317600$ \\
3 & $-42.2237492236424$\\
4 & $-172.7346543048027$\\
5 & $-813.3404889159924$\\
\end{tabular}
\end{table}

\end{document}